\providecommand{\U}[1]{\protect\rule{.1in}{.1in}}
\providecommand{\U}[1]{\protect\rule{.1in}{.1in}}
\newtheorem{theorem}{Theorem}
\newtheorem{definition}[theorem]{Definition}
\newtheorem{lemma}[theorem]{Lemma}
\newtheorem{remark}[theorem]{Remark}
\newenvironment{proof}[1][Proof]{\noindent\textbf{#1.} }{\ \rule{0.5em}{0.5em}}
\begin{document}

\title{Regularity at infinity of Hadamard manifolds with respect to some elliptic
operators and applications to asymptotic Dirichlet problems}
\author{Jaime Ripoll -- Miriam Telichevesky}
\date{November, 2012}
\maketitle

\begin{abstract}
Let $M$ be Hadamard manifold with sectional curvature $K_{M}\leq-k^{2}$,
$k>0$. Denote by $\partial_{\infty}M$ the asymptotic boundary of $M$. We say
that $M$ satisfies the strict convexity condition (SC condition) if, given
$x\in\partial_{\infty}M$ and a relatively open subset $W\subset\partial
_{\infty}M$ containing $x$, there exists a $C^{2}$ open subset $\Omega\subset
M$ such that $x\in\operatorname*{Int}\left(  \partial_{\infty}\Omega\right)
\subset W$ and $M\setminus\Omega$ is convex. We prove that the SC condition
implies that $M$ is regular at infinity relative to
the operator
\[
\mathcal{Q}\left[  u\right]  :=\mathrm{{\,div\,}}\left(  \frac{a(|\nabla
u|)}{|\nabla u|}\nabla u\right)  ,
\]
subject to some conditions. It follows that under
the SC condition, the Dirichlet problem for the minimal hypersurface and the
$p$-Laplacian ($p>1$) equations are solvable for any prescribed continuous
asymptotic boundary data. It is also proved that if $M$ is rotationally
symmetric or if $\inf_{B_{R+1}}K_{M}\geq-e^{2kR}/R^{2+2\epsilon},\,\,R\geq
R^{\ast},$ for some $R^{\ast}$ and $\epsilon>0,$ where $B_{R+1}$ is the
geodesic ball with radius $R+1$ centered at a fixed point of $M,$ then $M$
satisfies the SC condition.
\end{abstract}

\section{Introduction}

\qquad Let $M$ be Hadamard manifold (complete, simply connected Riemannian
manifold) with sectional curvature $K_{M}\leq-k^{2},$ $k>0$. Denote by
$\overline{M}$ the compactification of $M$ in the cone topology and by
$\partial_{\infty}M$ the asymptotic boundary of $M$ (see for instance
\cite{EO}). This upper bound hypothesis on the sectional curvature of $M$
will be assumed throughout the paper.

%\bigskip
We consider the elliptic diferential operator $\mathcal{Q}$ defined in the
Sobolev space $W_{\mathrm{{loc}}}^{1,p}(M)$, for some $p\geq1$, given by
\begin{equation}
\mathcal{Q}\left[  u\right]  :=\mathrm{{\,div\,}}\left(  \frac{a(|\nabla
u|)}{|\nabla u|}\nabla u\right)  =0 \label{Qdefinition}%
\end{equation}
where $a\in C^{1}\left(  \left[  0,\infty\right)  \right)  $ satisfies
\begin{align}
&  a(0)=0,a^{\prime}(s)>0\text{ for all }s>0;\label{a1}\\
&  a(s)\leq C\left(  s^{p-1}+1\right)  \text{ for all }s\in\lbrack
0,+\infty),\text{ for some constant }C>0;\label{a2}\\
&  \text{there exist }q>0\text{ and }\delta>0\text{ such that }a(s)\geq
s^{q},\text{ }s\in\left[  0,\delta\right]  \label{a3}%
\end{align}
In the above PDE equation $\operatorname{div}$ and $\nabla$ denote the
divergence and the gradient in $M.$

The asymptotic Dirichlet problem with boundary condition $\varphi\in
C^{0}\left(  \partial_{\infty}M\right) $ consists in finding a solution $u\in
W_{\mathrm{{loc}}}^{1,p}(M)\cap C^{0}\left(  M\right)  $ of $\mathcal{Q}=0$ in
$M$ that extends continuously to $\partial_{\infty}M$ and $u|_{\partial
_{\infty}M}=\varphi$. By a solution $u$ of $\mathcal{Q}=0$ in $M$ we mean a
weak solution, that is, $u$ is in the Sobolev space $W_{\operatorname*{loc}%
}^{1,p}(M)$ and satisfies
\[
\int_{M}\left\langle \frac{a\left(  \left\vert \nabla u\right\vert \right)
}{|\nabla u|} \nabla u ,\nabla\xi\right\rangle =0
\]
for all all $\xi\in W_{0}^{1,p}(M)$.

In the Laplacian case this problem has been intensively investigated on the
last three decades (see, for instance, \cite{A}, \cite{Choi}, \cite{Hs},
\cite{Ne}, \cite{SY} and references therein).

\bigskip
The notion of \textit{regularity }of the boundary of a domain $\Omega\subset
M$ is fundamental in elliptic PDE theory to prove that a solution of the
Dirichlet problem of an elliptic PDE assumes a given value at the boundary of
$\Omega$ (see \cite{GT}). To deal with the asymptotic Dirichlet problem in $M$
we extend this notion to the asymptotic boundary $\partial_{\infty}M$ of $M$
as follows.

%\bigskip
We say that $M$ is regular at infinity with respect to $\mathcal{Q}$ if, given
$C>0,$ $x\in\partial_{\infty}M,$ and a relatively open subset $W\subset
\partial_{\infty}M$ containing $x,$ there are an open subset $\Omega\subset M$
such that $x\in\operatorname*{Int}\left(  \partial_{\infty}\Omega\right)
\subset W,\ $sub and supersolutions $\sigma,\Sigma\in C^{0}\left(  M\right)  $
of $\mathcal{Q}=0$ in $M$ (called \textit{barriers }at $x)$ such that
$\sigma\leq0\leq\Sigma$, $\lim_{p\rightarrow x}\sigma(p)=\lim_{p\rightarrow
x}\Sigma(p)=0$ and $\sigma|_{M\setminus\Omega}\leq-C$ and $\Sigma
|_{M\setminus\Omega}\geq C$ (see Section \ref{preliminaries} and Definition
\ref{reg} for more details).

%\bigskip
We prove that if $M$ is regular at infinity with respect to $\mathcal{Q}$ and if:

\begin{enumerate}
\item[{\bf(a)}] there is an exhaustion of $M$ by an increasing sequence of
$C^{\infty}$ bounded subdomains $\Omega_{k}\subset M$ such that the Dirichlet
problem for $\mathcal{Q}=0$ is solvable in $\Omega_{k}$ for any boundary data
which is in $C^{\infty}\left( \partial\Omega_{k}\right) $,

\item[{\bf(b)}] sequences of solutions with uniformly bounded $C^{0}$ norm are
compact in relatively compacts subsets of $M$,
\end{enumerate}

\noindent then the asymptotic Dirichlet problem for $\mathcal{Q}$ is solvable
for any continuous boundary data (Theorem \ref{abcd}).

%\bigskip
In view of the classical and more recent results on geometric quasilinear
elliptic PDE theory, conditions \textbf{(a)} and \textbf{(b)} hold in a large
class of elliptic PDE's, reducing then the solvability of this problem to the
regularity of $M$ at infinity with respect to the operator $\mathcal{Q}$.

\bigskip
From the well known work of H. Choi \cite{Choi}, it follows that if $M$
satisfies the convex conic neighborhood condition, namely, any two distinct
points of $\partial_{\infty}M$ can be separated by $C^{2}$ convex open
disjoint neighborhoods of these points in $\overline{M},$ then any point at
infinity is regular for the Laplace operator (the $2-$Laplacian$)$; in
particular, the asymptotic Dirichlet problem for the Laplace equation $\Delta
u=0$ is solvable for any given continuous boundary data $\varphi$ at
$\partial_{\infty}M.$ Since Choi's proof using the convex conic neighborhood
condition is heavily based on the linearity of the Laplacian operator, it does
not apply directly to quasi linear elliptic PDE's, as the $p-$Laplacian or the
minimal hypersurface PDE.

%\bigskip
In the present work we propose a different notion of convexity; informally
speaking, when one can extract from $\overline{M}$ a neighborhood of any point
of $\partial_{\infty}M$ such that what remains is still convex (as it happens
with strictly convex bounded domains). The precise definition is:

\begin{definition}
Let $M$ be a Hadamard manifold. We say that $M$ satisfies the \emph{strict
convexity condition (SC condition)} if, given $x\in\partial_{\infty}M$ and a
relatively open subset $W\subset\partial_{\infty}M$ containing $x,$ there
exists a $C^{2}$ open subset $\Omega\subset\overline{M}$ such that
$x\in\operatorname*{Int}\left(  \partial_{\infty}\Omega\right)  \subset W,$
where $\operatorname*{Int}\left(  \partial_{\infty}\Omega\right)  $ denotes
the interior of $\partial_{\infty}\Omega$ in $\partial_{\infty}M,$ and
$M\setminus\Omega$ is convex.
\end{definition}

As we shall see, the SC condition gives an a priori control of the behavior at
infinity of the solutions of divergence form quasi-linear elliptic PDEs
\eqref{Qdefinition} subject to the conditions \eqref{a1}--\eqref{a3}. In fact
we prove that if $M$ satisfies the SC condition then it is regular at infinity
with respect to $\mathcal{Q}$ (Theorem \ref{barriers}). As a consequence,
under the SC condition, when $\mathcal{Q}\ $satisfies conditions \textbf{(a)}
and \textbf{(b)} then the asymptotic Dirichlet problem for $\mathcal{Q}$ is
solvable for any continuous boundary data (Theorem \ref{scab}).

%\bigskip
We observe that the minimal hypersurface PDE $\mathcal{M}=0$ and the
$p-$Laplacian PDE $\Delta_{p}=0,$ $p>1,$ are special cases of
$\mathcal{Q}=0$, where $a(t)=t/\sqrt{1+t^{2}}$ and $a(t)=t^{p-1}$,
respectively. It is easy to see that they both satisfy conditions (\ref{a1}),
(\ref{a2}), (\ref{a3}) and, as we shall see later, both $\mathcal{M}$ and
$\Delta_{p}$ satisfy conditions \textbf{(a)} and \textbf{(b)}. Then, from
Theorem \ref{barriers}, we obtain that if $M$ satisfies the SC condition then
the Dirichlet problem with prescribed continuous asymptotic data is solvable
for $\mathcal{M}$ and $\Delta_{p},$ $p>1$ (Theorem \ref{main})$.$

\bigskip
To show the extent of the above results, we first prove that if $M$ has a
rotationally symmetric metric then it satisfies the SC condition (Theorem
\ref{rotsim}). By an adaptation of a nice construction due to Borb\'{e}ly
\cite{B1} we also prove that $\inf_{B_{R+1}}K_{M}\geq-e^{2kR}/R^{2+2\epsilon
},\,\,R\geq R^{\ast},$ for some $R^{\ast}$ and $\epsilon>0,$ where $B_{R+1}$
is the geodesic ball with radius $R+1$ centered at a fixed point of $M,$ our
SC condition is satisfied (Theorem \ref{bor}). Remark that $2$-dimensional
Hadamard manifolds (under the assumption $K_{M}\leq-k^{2}$) always satisfy the
SC condition, since any two points of $\partial_{\infty}M$ can be connected by
a geodesic.

The classes of Hadamard manifolds $M$ that we have considered essentially
satisfy both Choi's convex conic condition and our SC condition. It may then
be possible that these conditions are actually equivalent. So far, this
remains an open problem. We also observe that if no convexity is required then
it was constructed by Ancona \cite{Anc} and Borb\'ely \cite{B2} examples
of $3-$ dimensional Hadamard manifolds $M$ with $K_{M}\leq-1$ for which the
asymptotic Dirichlet problem for the Laplace equation is not solvable for all
continuous non constant boundary data. In \cite{H2} I. Holopainen extended Borb\'ely's resulto to the $p-$Laplacian PDE and, quite recently, the first author of the present article and I. Holopainen \cite{HR} extended these nonsolvability results to the class of PDEs \eqref{Qdefinition} including, particularly, the minimal hypersurface PDE. We finally mention that the hypothesis $K_{M}\leq-k^{2}<0$ cannot be relaxed to $K_{M}<0,$ even in dimension $2.$ This follows from the results of M. Rigoli and A. Setti in \cite{RS}\ where the authors study the PDE
\eqref{Qdefinition} focusing in nonexistence Liouville's type theorems (see
Remark \ref{KM} below).

\section{\label{preliminaries}The asymptotic Dirichlet problem for Hadamard
manifolds which satisfy the strict convexity condition}

Let $M$ be a Hadamard manifold. We say that a function $\Sigma\in C^{0}\left(
M\right)  $ is a supersolution for $\mathcal{Q}$ if, given a bounded domain
$U\subset M,$ if $u\in C^{0}\left(  \overline{U}\right)  $ is a solution of
$\mathcal{Q}=0$ in $U$, the condition $u|_{\partial U}\leq\Sigma|_{\partial
U}$ implies that $u\leq\Sigma|_{U}$.

Given $x\in\partial_{\infty} M$ and an open subset $\Omega\subset M$ such that
$x\in\partial_{\infty} \Omega$, an \emph{upper barrier for $\mathcal{Q}$
relative to $x$ and $\Omega$ with height $C$} is a function $\Sigma\in
C^{0}(M)$ such that

\begin{enumerate}
\item[(i)] $\Sigma$ is a supersolution for $\mathcal{Q}$;

\item[(ii)] $\Sigma\ge0$ and $\displaystyle\lim_{p\in M,\,p\to x} \Sigma(p) =0$, the limit with respect to the cone topology;
\item [(iii)] $\Sigma_{M\setminus\Omega} \ge C$.
\end{enumerate}

Similarly, we define subsolutions and lower barriers.

\begin{definition}
\label{reg} Let $M$ be a Hadamard manifold. We say that $M$ is \emph{regular
at infinity with respect to the differential operator $\mathcal{Q}$} if, given
$C>0$, $x\in\partial_{\infty}M$ and an open subset $W\subset\partial_{\infty
}M$ with $x\in W$, there exist an open set $\Omega\subset M$ such that
$x\in\operatorname*{Int} \partial_{\infty} \Omega\subset W$ and upper and
lower barriers $\Sigma,\sigma:M\rightarrow\mathbb{R}$ relatives to $x$ and
$\Omega$, with height $C$.
\end{definition}

Compare this definition with Definition 2.6 and Theorem 2.7 in \cite{Choi} for
the case of the Laplace operator and also with Theorem 3.3 and Definition 3.4
in \cite{HV} for the case of the $p-$Laplacian.

The next lemma give candidates to supersolutions of $\mathcal{Q}=0$, and hence
to upper barriers. Although it is a well known result, we write the proof for
reader's convenience.

\begin{lemma}
\label{QvleQu} Let $U\subset M$ be an open set and $v\in C^{0}(U)\cap
W^{1,p}_{loc}(U)$ such that, for every $\xi\in C^{\infty}_{0}(U)$ with $\xi
\ge0$, it holds that
\begin{equation}
\label{weaksupersolution}\int_{U} \left\langle \frac{a(|\nabla v|}{|\nabla v|}
\nabla v, \nabla\xi\right\rangle dx \ge0
\end{equation}
($v$ is frequently known as a \emph{weak supersolution of $\mathcal{Q}=0$}).
Then $v$ is a supersolution of $\mathcal{Q}=0$. In particular, if $v\in
C^{2}(U)$ satisfies $\mathcal{Q}[v]\le0$, then $v$ is a supersolution.
\end{lemma}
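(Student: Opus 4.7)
The plan is to prove the weak comparison principle: for any bounded subdomain $U' \Subset U$ and any solution $u \in C^0(\overline{U'})$ of $\mathcal{Q}[u]=0$ in $U'$ with $u|_{\partial U'}\le v|_{\partial U'}$, I will show $u \le v$ in $U'$, which is the content of the conclusion "$v$ is a supersolution" in this setting.

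The key analytic input is the strict monotonicity of the vector field $A(X) := (a(|X|)/|X|)\,X$ (with $A(0)=0$) associated with $\mathcal{Q}$. One checks that $A$ is the fibrewise gradient of the radial function $F(X) := \int_0^{|X|} a(s)\,ds$, whose Hessian at $X \neq 0$ equals $\tfrac{a(|X|)}{|X|}\bigl(\mathrm{Id}-\hat X\otimes \hat X\bigr) + a'(|X|)\,\hat X\otimes\hat X$, where $\hat X := X/|X|$. Hypothesis \eqref{a1} forces both eigenvalues $a(|X|)/|X|$ and $a'(|X|)$ to be strictly positive for $X\neq 0$, so $F$ is strictly convex on each tangent space; equivalently,
$$\langle A(X)-A(Y),\,X-Y\rangle>0\quad\text{whenever }X\neq Y.$$

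The rest is routine. Fix $\epsilon>0$ and set $\xi := (u-v-\epsilon)^+$, extended by zero outside $U'$. Continuity of $u-v$ on $\overline{U'}$ together with $u\le v$ on $\partial U'$ makes the support of $\xi$ compactly contained in $U'$; combined with $u,v \in W^{1,p}_{\loc}(U')$ (the former from regularity of weak solutions, the latter from hypothesis), this gives $\xi \in W^{1,p}_0(U')$ with $\nabla \xi = \chi_{\{u-v>\epsilon\}}(\nabla u - \nabla v)$. Inserting $\xi$ into the weak equation for $u$ (giving $0$) and into \eqref{weaksupersolution} for $v$ (giving $\ge 0$) and subtracting yields
$$\int_{\{u-v>\epsilon\}\cap U'}\langle A(\nabla u)-A(\nabla v),\,\nabla u-\nabla v\rangle\,\le\, 0.$$
By the monotonicity above the integrand is pointwise nonnegative, so $\nabla u=\nabla v$ a.e.\ on $\{u-v>\epsilon\}$; letting $\epsilon\downarrow 0$ gives $\nabla(u-v)^+\equiv 0$ in $U'$, and since $(u-v)^+$ is continuous and vanishes on $\partial U'$ it vanishes identically, proving $u\le v$. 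The final "in particular" assertion is immediate: the divergence theorem applied to $\mathcal{Q}[v]\le 0$ against nonnegative $\xi\in C^\infty_0(U)$ produces \eqref{weaksupersolution}.

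The main obstacle — really the whole analytic content — is the strict monotonicity of $A$ displayed above; hypothesis \eqref{a1} is engineered precisely to supply it, and without it the comparison would fail. A minor technical subtlety is that one must test with the truncations $(u-v-\epsilon)^+$ rather than $(u-v)^+$ itself, in order to secure compact support inside $U'$ and hence membership in $W^{1,p}_0(U')$ without imposing additional Sobolev regularity up to $\partial U'$.
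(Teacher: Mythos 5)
Your proposal is correct and follows essentially the same route as the paper: test the weak formulations for $u$ and $v$ with the positive part of their difference on a compactly contained subdomain, exploit monotonicity of the field $X\mapsto a(|X|)X/|X|$ guaranteed by \eqref{a1} to force $\nabla u=\nabla v$ on the bad set, and conclude via continuity and the boundary condition; the only differences are cosmetic (you justify the pointwise monotonicity through strict convexity of the potential $F(X)=\int_0^{|X|}a(s)\,ds$, where the paper uses Cauchy--Schwarz plus the monotonicity and injectivity of $a$, and your $\epsilon$-truncation $(u-v-\epsilon)^+$ replaces the paper's unproved remark that \eqref{weaksupersolution} extends to nonnegative $W^{1,p}_0$ test functions). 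Both treatments of the final ``in particular'' statement coincide (integration by parts).
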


\begin{proof}
We first notice that inequality \eqref{weaksupersolution} holds for all
$0\le\xi\in C^{\infty}_{0}(U)$ if and only if it holds for all $\xi\in
W^{1,p}_{0}(U)$, with $\xi\ge0$ a.e..

In order to prove that $v$ is a supersolution, let $B\subset\subset U$ and
$u\in C^{0}(\overline{B}) \cap W^{1,p}(B)$ such that $u\le v$ on $\partial B$.
We must prove that $u\le v$ in $B$. For, notice that inequality
\eqref{weaksupersolution} holds for the particular choice of $\xi$ given by
$\xi=(u-v)_{+}:=\max\{u-v,0\}$, by the remark above. On the other hand, since
$u$ is a solution of $\mathcal{Q}=0$, it holds that
\[
\int_{B} \left\langle \frac{a(|\nabla u|}{|\nabla u|} \nabla u, \nabla
\xi\right\rangle dx =0.
\]
Therefore we may combine both expressions and obtain

\begin{equation}
\label{integralnegativa}\int_{B} \left\langle \frac{a(|\nabla u|)}{|\nabla
u|}\nabla u - \frac{a(|\nabla v|)}{|\nabla v|}\nabla v, \nabla u - \nabla v
\right\rangle dx \le0.
\end{equation}

On the other hand,

\begin{align*}
\left\langle \frac{a(|\nabla u|)}{|\nabla u|}\nabla u - \frac{a(|\nabla
v|)}{|\nabla v|}\nabla v, \nabla u - \nabla v \right\rangle  = a(|\nabla u|)
|\nabla u|^{2} \\- \frac{a(|\nabla u|)}{|\nabla u|} \langle\nabla u, \nabla
v\rangle - \frac{a(|\nabla v|)}{|\nabla v|} \langle\nabla u, \nabla v \rangle  +
a(|\nabla v|)|\nabla v|\ge \\  a(|\nabla u|)|\nabla u|-a(|\nabla u|)|\nabla v|
 - a(|\nabla v|)|\nabla u| +a(|\nabla v|)|\nabla v|=\\
 \left(a(|\nabla u|) - a(|\nabla v|) \right) \left( |\nabla u|-|\nabla
v|\right) ,
\end{align*}
where the inequality is implied by Cauchy-Schwarz inequality; and since $a$ is
increasing the last product must be $\ge0$. Hence the integrated function on
\eqref{integralnegativa} is nonnegative and since its integral is nonpositive,
it vanish a.e. on $B$. Since $a$ is injective, it follows that $\nabla u = \nabla v$ a.e. in $B$ which implies
that $(u-v)_{+}$ is constant a.e.. Since it is continuous, we conclude
that it actually is constant in $B$ and since it vanishes on the boundary,
$(u-v)_{+}=0$, which finishes the first part of the proof.

The second part is a consequence of integration by parts.
\end{proof}

\begin{theorem}
\label{abcd}Let $M$ be a Hadamard manifold with sectional curvature $K_{M}\leq-k^{2}<0$ with is regular at infinity with respect to $\mathcal{Q}$.
Assume moreover that

\begin{enumerate}
\item[{\bf(a)}] there is a sequence of bounded $C^{\infty}$ domains $\Omega
_{k}\subset\Omega,$ $k\in\mathbb{N},$ satisfying $\Omega_{k}\subset
\Omega_{k+1},$ $\cup\Omega_{k}=M,$ such that the Dirichlet problem for
$\mathcal{Q}=0$ is solvable in $\Omega_{k}$ for any $C^{\infty}$ boundary data,

\item[{\bf(b)}] sequences of solutions with uniformly bounded $C^{0}$ norm are
compact in relatively compacts subsets of $\Omega$.
\end{enumerate}

\noindent Then the asymptotic Dirichlet problem of $\mathcal{Q}$ is solvable
for any continuous boundary data.
\end{theorem}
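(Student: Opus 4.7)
The plan is a Perron-style exhaustion argument. Given continuous $\varphi$ on $\partial_{\infty}M$, I would first extend $\varphi$ by Tietze to a continuous function $\tilde{\varphi}$ on $\overline{M}$ preserving the sup-norm $\Lambda := \|\varphi\|_{\infty}$, and then approximate each trace $\tilde{\varphi}|_{\partial\Omega_{k}}$ uniformly by some $\varphi_{k} \in C^{\infty}(\partial\Omega_{k})$ with $\|\varphi_{k} - \tilde{\varphi}\|_{C^{0}(\partial\Omega_{k})} < 1/k$. Hypothesis \textbf{(a)} then yields solutions $u_{k} \in C^{0}(\overline{\Omega_{k}})$ of $\mathcal{Q}[u_{k}] = 0$ in $\Omega_{k}$ with $u_{k}|_{\partial\Omega_{k}} = \varphi_{k}$. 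Since constants solve $\mathcal{Q} = 0$ (as $a(0) = 0$), the comparison principle implicit in Lemma \ref{QvleQu} provides the uniform bound $\|u_{k}\|_{C^{0}(\overline{\Omega_{k}})} \leq \Lambda + 1$, and hypothesis \textbf{(b)} then extracts a subsequence converging locally uniformly on $M$ to a weak solution $u$ of $\mathcal{Q} = 0$ with $|u| \leq \Lambda$.

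The core step is to verify that $u(p) \to \varphi(x)$ as $p \to x \in \partial_{\infty}M$ in the cone topology. Fixing $\epsilon > 0$, I would pick a relatively open $W \ni x$ in $\partial_{\infty}M$ with $|\varphi - \varphi(x)| < \epsilon/3$ on $W$, and invoke regularity at infinity with height $C := 2\Lambda + 1$ to obtain an open $\Omega \subset M$ with $x \in \mathrm{Int}\,\partial_{\infty}\Omega \subset W$ and an upper barrier $\Sigma \geq 0$ vanishing at $x$ and satisfying $\Sigma \geq C$ on $M \setminus \Omega$. The function $v := \varphi(x) + \epsilon + \Sigma$ is then itself a supersolution. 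On $\partial\Omega_{k} \cap (M \setminus \Omega)$ the choice of $C$ gives $v \geq \Lambda + \epsilon \geq u_{k}$ directly. On $\partial\Omega_{k} \cap \Omega$ I would invoke the observation that the set $A := \{p \in \overline{\Omega} : \tilde{\varphi}(p) \geq \varphi(x) + \epsilon/2\}$ is closed in the compact space $\overline{M}$ and, because $\tilde\varphi = \varphi$ is strictly less than $\varphi(x)+\epsilon/2$ on the relevant asymptotic boundary $\partial_{\infty}\Omega \subset W$, is disjoint from $\partial_{\infty}M$; hence $A$ is a compact subset of $M$, and so $A \subset \Omega_{k}$ for all $k$ large. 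This forces $\tilde{\varphi} < \varphi(x) + \epsilon/2$ and thus $\varphi_{k} < \varphi(x) + \epsilon$ on $\partial\Omega_{k} \cap \Omega$ once $1/k < \epsilon/2$, so $u_{k} \leq v$ on the whole of $\partial\Omega_{k}$. The comparison principle then yields $u_{k} \leq v$ on all of $\Omega_{k}$ for $k \geq k_{0}$, and passing to the locally uniform limit gives $u \leq \varphi(x) + \epsilon + \Sigma$ on $M$. Since $\Sigma(p) \to 0$ as $p \to x$, this implies $\limsup_{p \to x} u(p) \leq \varphi(x) + \epsilon$; the symmetric argument with the lower barrier $\sigma$ gives $\liminf_{p \to x} u(p) \geq \varphi(x) - \epsilon$, and letting $\epsilon \downarrow 0$ concludes.

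The principal obstacle is the comparison on $\partial\Omega_{k} \cap \Omega$: the barrier $\Sigma$ controls solutions only outside $\Omega$ (where $\Sigma \geq C$) and asymptotically at $x$ (where $\Sigma \to 0$), so inside the bulk of $\Omega$ it provides no useful pointwise lower bound, while $\Omega$ may be very large as a subset of $M$ and the extension $\tilde{\varphi}$ need not be close to $\varphi(x)$ there. The compactness trick above bypasses this: because the asymptotic boundary of $\Omega$ lies in $W$, the set inside $\overline{\Omega}$ on which $\tilde{\varphi}$ is far from $\varphi(x)$ is bounded in $M$ and is eventually absorbed by the exhaustion $\{\Omega_{k}\}$. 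The remaining ingredients—approximating continuous boundary data by $C^{\infty}$ data to apply \textbf{(a)}, confirming that the locally uniform limit from \textbf{(b)} is a weak solution, and running the symmetric lower-barrier argument—are routine.
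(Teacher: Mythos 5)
Your proposal is correct and follows essentially the same route as the paper's proof: solve on the exhaustion $\Omega_k$ using \textbf{(a)}, extract a locally uniform limit via \textbf{(b)} and a diagonal argument, and trap the limit near each $x\in\partial_\infty M$ between $\varphi(x)\pm\varepsilon\mp$ barrier by the comparison principle of Lemma \ref{QvleQu}. Your refinements (smoothing the traces rather than taking a smooth extension, the explicit compactness argument producing $k_0$, the height $2\Lambda+1$, and comparing on all of $\Omega_k$ instead of $\Omega\cap\Omega_k$) only tighten steps the paper states more briefly.
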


\begin{proof}
Let $\phi\in C^{\infty}(M)\cap C^{0}(\overline{M})$ be a smooth extension of
$\varphi$. Condition \textbf{(a)} allows us to solve the Dirichlet problem
\[
\left\{
\begin{array}
[c]{l}\mathcal{Q}[u]=0\text{ in }\Omega_{k}, u\in W^{1,p}_{loc}(\Omega_{k})\cap
C^{0}(\overline{\Omega_{k}})\\
u|_{\partial\Omega_{k}}=\phi,
\end{array}
\right.
\]
finding a solution $u_{k}\in C^{0}(\overline{\Omega_{k}})$.

Condition \textbf{(b)} together with the diagonal method show that there
exists a subsequence of $(u_{k})$ (which we suppose to be $(u_{k})$)
converging uniformly on compact subsets of $M$ to a global solution of $\mathcal{Q}
=0$, which we denote by $u$. One needs to show that $u$ extends continuously
to $\partial_{\infty} M$ and satisfies $u|_{\partial_{\infty} M}=\varphi$.

For, let $x\in\partial_{\infty} M$ and $\varepsilon>0$. Since $\varphi$ is continuous, there exists an open neighborhood
$W\subset\partial_{\infty} M$ of $x$ such that $\varphi(y) < \varphi(x)
+\varepsilon/2$ for all $y\in W$. Furthermore, the regularity of $M$ at infinity with respect to $\mathcal{Q}$ implies that there exists an open subset $\Omega\subset M$ such that
$x\in\operatorname*{Int}\left( \partial_{\infty} \Omega\right)  \subset W$ and
$\Sigma: M \to\mathbb{R}$ upper barrier with respect to $x$ and $\Omega$ with
height $C:=\max_{\overline{M}} |\phi|$.

Defining
\[
v(q):=\Sigma(q)+\varphi(x)+\varepsilon,
\]
we claim that $u\le v$ in $\Omega$.

Since $\phi$ is continuous, $\exists k_{0} >>0$ such that $\phi(q) <
\varphi(x) + \varepsilon/2$ for all $q\in\partial\Omega_{k} \cap\Omega$, $k\ge
k_{0}$; we may chose $k_{0}$ such that $\Omega_{k_{0}}\cap\Omega\neq\emptyset$.

Let $V_{k}:=\Omega\cap\Omega_{k}$, $k\ge k_{0}$. Claim: $u_{k}\le v$ in
$V_{k}$. In fact, the inequality holds on $\partial V_{k}=\overline{\left(
\partial\Omega_{k}\cap\Omega\right) }\cup\overline{\left( \partial\Omega
\cap\Omega_{k}\right) }$: on $\partial\Omega_{k} \cap\Omega$, it is true due
to the choice of $k_{0}$; on $\partial\Omega\cap\Omega_{k}$, it holds because
$\Sigma\ge\max|\varphi|$ on $\partial\Omega$, which implies that $\Sigma\ge
u_{k}$, by the Comparison Principle (Lemma \ref{QvleQu}).

Also the Comparison Principle implies that $u_{k} \le v$ in $V_{k}$; since it
holds for all $k\ge k_{0}$, we have $u\le v$ on $\Omega$.

It is also possible to define $v_{-}:M \to\mathbb{R}$ by $v_{-}(q):=\varphi
(x)-\varepsilon- \Sigma(q)$ in order to obtain $u\ge v_{-}$ in $\Omega$. It
then holds that
\[
|u(q) - \varphi(x) | < \varepsilon+ \Sigma(q),\,\forall\, q\in\Omega,
\]
and hence
\[
\limsup_{p\to x} |u(p) - \varphi(x)| \le\varepsilon.
\]
The proof is complete, since $\varepsilon>0$ is arbitrary.
\end{proof}

\begin{remark}
We remark that in the bounded case, the regularity of the domain seems to
depend heavier on the operator. For instance, any bounded $C^{2}$ domain is
regular for the Laplace equation as, for the mean curvature operator, the
$C^{2}$ regularity is not enough, the domain has to be convex too.
\end{remark}

\begin{theorem}
\label{barriers} Let $M$ be a Hadamard manifold with sectional curvature
$K_{M}\leq-k^{2}$ satisfying the SC condition. Then $M$ is regular at infinity
with respect to $\mathcal{Q}$.
\end{theorem}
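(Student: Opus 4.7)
Given $C>0$, $x\in\partial_{\infty}M$, and a relatively open $W\subset\partial_{\infty}M$ containing $x$, the plan is to invoke the SC condition to produce a $C^{2}$ open set $\Omega\subset\overline{M}$ with $x\in\operatorname{Int}(\partial_{\infty}\Omega)\subset W$ and $A:=M\setminus\Omega$ convex, and then to build the upper barrier in the form $\Sigma(q):=f(d(q))$, with $d(q):=\operatorname{dist}(q,A)$ and $f:[0,\infty)\to[0,C]$ a decreasing $C^{2}$ function with $f(0)=C$ and $f(\infty)=0$ to be chosen. Because $\mathcal{Q}[-u]=-\mathcal{Q}[u]$, the lower barrier will be $\sigma:=-\Sigma$, so everything reduces to producing $\Sigma$.

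\noindent\textbf{Laplacian estimate.} Since $A$ is closed and convex with $C^{2}$ boundary in a Hadamard manifold, the nearest-point projection onto $A$ is well-defined and the outward normal exponential map from $\partial A$ is a diffeomorphism onto $\Omega$; hence $d\in C^{2}(\Omega)$ with $|\nabla d|\equiv 1$, and $d\equiv 0$ on $A$. A standard Jacobi field (Rauch) comparison against the constant curvature $-k^{2}$ model, using the nonpositivity of the outward shape operator of $\partial A$ (convexity of $A$), yields the key estimate
\[
\Delta d(q)\;\ge\;(n-1)\,k\tanh\bigl(k\,d(q)\bigr),\qquad q\in\Omega,
\]
which extends across $\partial\Omega$ with $\Delta d\ge 0$.

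\noindent\textbf{Choice of $f$.} A direct computation for $\Sigma=f\circ d$ with $g:=-f'>0$ gives
\[
\mathcal{Q}[\Sigma]\;=\;a'(-f')\,f''-a(-f')\,\Delta d\;=\;-a'(g)\,g'-a(g)\,\Delta d,
\]
so $\mathcal{Q}[\Sigma]\le 0$ is implied by $-(\log a\circ g)'\le(n-1)k\tanh(kt)$. Imposing equality reduces to a separable ODE whose solution is $a(g(t))=a(\alpha)\cosh(kt)^{-(n-1)}$ with $\alpha:=g(0)$. Assumption~\eqref{a3} then forces $g(t)\le a(\alpha)^{1/q}\cosh(kt)^{-(n-1)/q}$ for large $t$, so $g\in L^{1}([0,\infty))$; moreover $\alpha\mapsto\int_{0}^{\infty}g$ is a continuous monotone bijection onto $(0,\infty)$. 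I would then fix the unique $\alpha$ with $\int_{0}^{\infty}g=C$ and set $f(t):=C-\int_{0}^{t}g$. This $f$ is $C^{2}$, strictly decreasing, with $f(0)=C$, $f(\infty)=0$, and by construction $\mathcal{Q}[\Sigma]\le 0$ in $\Omega$.

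\noindent\textbf{Global verification and main obstacle.} The resulting $\Sigma$ (equal to $C$ on $A$ and to $f\circ d$ on $\Omega$) is continuous, nonnegative, and satisfies $\Sigma|_{A}\ge C$. Since $\nabla\Sigma\equiv 0$ on $A$ and $a(0)=0$ kills the flux from that side, integration by parts across $\partial\Omega$ yields
\[
\int_{M}\Bigl\langle\tfrac{a(|\nabla\Sigma|)}{|\nabla\Sigma|}\nabla\Sigma,\,\nabla\xi\Bigr\rangle\,dV\;=\;\int_{\partial\Omega}a(\alpha)\,\xi\,dS\;-\;\int_{\Omega}\xi\,\mathcal{Q}[\Sigma]\,dV\;\ge\;0
\]
for every $0\le\xi\in C_{0}^{\infty}(M)$, so $\Sigma$ is a weak supersolution and Lemma~\ref{QvleQu} promotes it to a supersolution. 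The remaining condition $\Sigma(p)\to 0$ as $p\to x$ reduces to $d(p)\to\infty$ along cone-topology convergence to $x$, and this is the step I expect to be the main obstacle: if $d(p_{n})$ stayed bounded for some sequence $p_{n}\to x$, then the nearest-point projections $\pi(p_{n})\in A$ would remain at bounded distance from $p_{n}$, and the exponential divergence of geodesics under $K_{M}\le-k^{2}$ would force $\pi(p_{n})\to x$ as well, yielding $x\in\partial_{\infty}A$ in contradiction with $x\in\operatorname{Int}(\partial_{\infty}\Omega)$ once the $C^{2}$ regularity of $\Omega$ in $\overline{M}$ from the SC condition is fully used. This asymptotic separation is where the strict convexity, and not merely the convexity of $A$, genuinely enters the argument.
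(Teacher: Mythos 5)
Your construction is essentially the paper's own proof: you use the same convex set $A=M\setminus\Omega$ furnished by the SC condition, the same comparison estimate $\Delta d\geq(n-1)k\tanh(kd)$, and the same radial profile, since imposing equality in your separable ODE gives $a(-f'(t))=a(\alpha)\cosh^{1-n}(kt)$, which is precisely the paper's $v=g\circ s$ with $a(|\nabla v|)=c\cosh^{1-n}ks$; the use of \eqref{a3} for integrability of $-f'$ and of Lemma \ref{QvleQu} to pass from $\mathcal{Q}\le 0$ to the comparison-type supersolution are also identical, as is taking $\sigma=-\Sigma$. The differences are in the details. For the gluing, the paper normalizes so that the profile's total height exceeds $C$ (it takes $c=a(2C)$) and then truncates, $\Sigma=\min\{v,C\}$ on $\Omega$ and $\Sigma\equiv C$ on $A$; since $v>C$ near $\partial\Omega$, $\Sigma$ is constant in a neighborhood of $\partial\Omega$ and no flux computation across $\partial\Omega$ is needed, whereas your integration by parts is also fine but relies on $C^{1}$ regularity of $d$ up to $\partial\Omega$ (available because $A$ is convex with $C^{2}$ boundary). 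Your assertion that $\alpha\mapsto\int_{0}^{\infty}g_\alpha$ is onto $(0,\infty)$ does need justification: it is immediate when $\sup a=\infty$ (e.g.\ the $p$-Laplacian) and holds for the minimal operator by a logarithmic divergence as $a(\alpha)\to\sup a$, and it plays exactly the role of the paper's equally terse claim that $g(0)\geq 2C$, so this is a shared soft spot rather than a divergence. Finally, the step you single out as the main obstacle, namely $d(p)\to\infty$ as $p\to x$, is not argued in the paper at all (the truncated function is simply declared to be an upper barrier); your reduction is the correct one: if $d(p_{n})$ stayed bounded along $p_{n}\to x$, then by angle comparison the nearest points in $A$ would also converge to $x$ in the cone topology, forcing $x\in\partial_{\infty}(M\setminus\Omega)$, which is what the SC condition is meant to preclude for the chosen $\Omega$ (and does preclude in all the constructions of Sections 3). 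Writing out that last implication is the only thing separating your proposal from a complete proof, and it is the one point on which the paper itself is silent.
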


\begin{proof}
Let $C>0$ and $x\in W\subset\partial_{\infty}M$ be given. Since $\mathcal{Q}
\left[  -u\right]  =-\mathcal{Q}\left[  u\right]  $ it is enough to prove the
existence of barrier from above at $x.$ Since $M$ satisfies the SC condition,
there exists a $C^{2}$ open subset $\Omega$ of $M$ such that $x\in
\operatorname*{Int}\left(  \partial_{\infty}\Omega\right)  \subset W$ and such
that $M\setminus\Omega$ is convex. Let $s:\Omega\rightarrow\mathbb{R}$ be the
distance function to $\partial\Omega$. Since $M\setminus\Omega$ is convex and
$K_{M}\leq-k^{2}$, we may apply comparison theorems (see Theorems 4.2 and 4.3
of \cite{Choi}) in order to obtain the estimative
\begin{equation}
\Delta s\geq(n-1)k\tanh ks. \label{Deltas}
\end{equation}
On the other hand, since $a^{\prime}>0$, $a$ has an inverse function
$a^{-1}\in C^{1}\left(  \left[  0,\alpha\right)  \right)  $ where $\alpha=\sup
a\leq\infty$. Set $c=a(2C).$

We may then define a function $g:\left[  0,\infty\right)  \rightarrow
\mathbb{R},$ $g\in C^{2}\left(  \left(  0,\infty\right)  \right)  ,$ possibly
with $g(0)=\infty,$ by
\begin{equation}
g(s):=\int_{s}^{\infty}a^{-1}\left(  c\cosh^{1-n}kt\right)  dt. \label{g}
\end{equation}

We observe that, from the assumptions on $a$, the function $g$ is well
defined. In fact, let $\tau$ satisfy
\[
c\cosh^{1-n}k \tau=\delta.
\]
Assuming, without loss of generality, that $\delta<c$, such $\tau$ exists
because $c\cosh^{1-n}0=c$ and $\lim_{t\rightarrow+\infty}c\cosh^{1-n}kt=0$. We
have $a^{-1}(t)\leq t^{1/q}$ if $a(t)\in\lbrack0,\delta]$, and therefore
\begin{align*}
g(s)  &  =\int_{s}^{\tau}a^{-1}(c\cosh^{1-n}kt)dt+\int_{\tau}^{+\infty}
a^{-1}(c\cosh^{1-n}kt)dt\\
&  \leq a^{-1}(cs)(\tau-s)+\int_{x}^{+\infty}(c\cosh^{1-n}kt)^{\frac{1}{q}}dt\\
&  \leq a^{-1}(cs)\tau+(2c)^{\frac{1}{q}}\int_{\tau}^{+\infty}e^{-\frac{kt}
{q}}dt=a^{-1}(cs)\tau+\frac{(2c)^{\frac{k}{q}}}{q}e^{-k\tau}<+\infty
\end{align*}
for all $s>0$. Furthermore,
\[
g(0)>\int_{0}^{1}a^{-1}\left(  c\cosh^{1-n}kt\right)  dt\geq a^{-1}(c)=2C
\]
and $\lim_{s\rightarrow\infty}g(s)=0$. Therefore we may define $v:\Omega
\rightarrow\mathbb{R}$ as
\[
v(p):=g(s(p)),
\]
and we want to show that $\mathcal{Q}(v)\leq0$. Notice that
\[
\nabla v(p)=g^{\prime}(s(p))\nabla s(p)=-a^{-1}\left( c\cosh^{1-n}
ks(p)\right)  \nabla s
\]
and then $|\nabla v|=|g^{\prime}(s(p))|=a^{-1}\left( c\cosh^{1-n}ks(p)\right)
$; furthermore, it holds that $\nabla v/|\nabla v|=-\nabla s$. Combining the previous
expressions, we obtain

\begin{align*}
\mathcal{Q}(v)= &  \mathrm{{\,div\,}}\left(  a\left(  |g^{\prime}(s(p))|\nabla
s(p)\right)  \right) \\
= &  \mathrm{{\,div\,}}\left(  a\left(  -a^{-1}\left(  c\cosh^{1-n}
ks(p)\right)  \right)  \nabla s(p)\right) \\
= &  \mathrm{{\,div\,}}\left(  -c\cosh^{1-n}ks(p)\nabla s(p)\right) \\
= &  -(1-n)ck\cosh^{-n}ks(p)\sinh ks(p)\langle\nabla s(p),\nabla s(p)\rangle\\
& -c\cosh^{1-n}ks(p)\Delta s(p)\\
\leq &  (n-1)ck\cosh^{-n}ks(p)\sinh ks(p)\\
& -(n-1)c\cosh^{1-n}ks(p)k\tanh ks(p)=0,
\end{align*}
and hence, by Lemma \ref{QvleQu}, $v$ is a supersolution on $\Omega$.

To finish with the proof, define the global supersolution $\Sigma\in
C^{0}\left(  \overline{M}\right)  $ by
\[
\Sigma(p)=\left\{
\begin{array}
[c]{ll}
\min\left\{  v(p),C\right\}  & \text{if }p\in\Omega\\
C & \text{if }p\in\overline{M}\setminus\Omega,
\end{array}
\right.
\]
which is of course an upper barrier relative to $x$ and $\Omega$ with height
$C$.
\end{proof}

As a consequence of Theorems \ref{abcd} and \ref{barriers} we obtain

\begin{theorem}
\label{scab} Let $M$ be a Hadamard manifold with sectional curvature
satisfying $K_{M}\leq-k^{2}$. Assume that $M$ satisfies the SC condition and
that $\mathcal{Q}$ satisfies conditions \emph{\textbf{(a)}} and
\emph{\textbf{(b)}}. Then the asymptotic Dirichlet problem of $\mathcal{Q}$ is
solvable for any continuous boundary data.
\end{theorem}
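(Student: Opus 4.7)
The plan is to observe that Theorem \ref{scab} follows by directly chaining together the two previous results already established in this section, namely Theorem \ref{barriers} and Theorem \ref{abcd}. There is no new analytic content to produce; the entire content of the statement has been packaged into hypotheses on $M$ and on $\mathcal{Q}$ that line up exactly with the input requirements of these two theorems.

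First I would invoke Theorem \ref{barriers}: the assumptions that $K_M \leq -k^2$ and that $M$ satisfies the SC condition are precisely the hypotheses of that theorem, so we may conclude that $M$ is regular at infinity with respect to $\mathcal{Q}$. This gives us, for every boundary point $x \in \partial_\infty M$, every neighborhood $W$ of $x$ in $\partial_\infty M$, and every height $C>0$, the existence of an open set $\Omega \subset M$ with $x \in \operatorname{Int}(\partial_\infty \Omega) \subset W$ together with the upper and lower barriers $\Sigma, \sigma$ demanded by Definition \ref{reg}. These barriers are supplied explicitly by the construction using the function $g$ built from $a^{-1}$ and $\cosh^{1-n}$ in the proof of Theorem \ref{barriers}.

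Next I would feed this regularity at infinity, together with the assumed conditions \textbf{(a)} and \textbf{(b)} on $\mathcal{Q}$, into Theorem \ref{abcd}. That theorem asserts exactly that these three ingredients (regularity at infinity with respect to $\mathcal{Q}$, an exhaustion by smooth bounded domains on which the Dirichlet problem is solvable, and compactness for sequences of solutions with uniformly bounded $C^0$ norm) imply that for each $\varphi \in C^0(\partial_\infty M)$ the asymptotic Dirichlet problem admits a solution $u \in W^{1,p}_{\mathrm{loc}}(M) \cap C^0(\overline{M})$ with $u|_{\partial_\infty M} = \varphi$. This concludes the proof.

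The only mild subtlety is a bookkeeping one: one has to check that the notions of regularity at infinity used in Theorem \ref{barriers} and in Theorem \ref{abcd} (stated in Definition \ref{reg}) coincide, which is immediate from the definitions, and that the barriers produced in the proof of Theorem \ref{barriers} have the precise height $C$ required by Theorem \ref{abcd} (namely $C = \max_{\overline{M}} |\phi|$ where $\phi$ is a continuous extension of $\varphi$). Both are built into the statements as written, so no real obstacle arises; the proof is essentially a one-line citation of the two earlier theorems.
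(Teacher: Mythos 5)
Your proposal is correct and is exactly how the paper proves Theorem \ref{scab}: the paper states it as an immediate consequence of Theorems \ref{abcd} and \ref{barriers}, with Theorem \ref{barriers} supplying regularity at infinity from the SC condition and Theorem \ref{abcd} converting that regularity together with conditions \textbf{(a)} and \textbf{(b)} into solvability. No discrepancies to report.
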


\begin{remark}
\label{KM}Since in the two-dimensional case the hypothesis $K_{M}\leq k^{2}<0$
implies the SC condition, it is interesting to know if Theorem \ref{scab}
remains true if we just require $K_{M}<0$. We may see that if
\begin{equation}
0>\inf_{\partial B_{r}}K_{M}\geq-\frac{1}{r^{2}\log r}, \label{KMB}
\end{equation}
where $B_{r}$ is a geodesic ball centered at some fixed point of $M,$ for
$r\geq r_{0}$, $r_{0}$ large enough, then Bishof comparison theorem implies
that $\left\vert \partial B_{r}\right\vert \leq r\log r,$ where $\left\vert
\partial B_{r}\right\vert $ is the volume of $\partial B_{r}.$ It follows that
for $p\geq 2$ and $s$ large enough we have
\[
\int_{s}^{\infty}\frac{dr}{\left\vert \partial B_{r}\right\vert ^{\frac{1}{p-1}}}
\geq\int_{s}^{\infty}\frac{dr}{r\log r}=\infty.
\]
From Theorem A of \cite{RS} it follows that there is no entire bounded
solution of $\mathcal{Q}=0$ in $M$ besides the constant functions whenever the function $a$ that defines $\mathcal{Q}$ satisfies $a(t)\le Ct$ {\em near the origin} for some $C>0$. This is the case, for instance, for the $p-$laplacian, $p\ge 2$, and minimal PDEs. In particular, the asymptotic Dirichlet problem for those cases is
not solvable for any continuous boundary data in $\partial_{\infty}M$ if
$K_{M}$ satisfies (\ref{KMB}). Hence, Theorem \ref{scab} is false if one
replaces the hypothesis $K_{M}\leq-k^{2}<0$ by $K_{M}<0.$

The nonexistence condition (\ref{KMB}) for the curvature is sharp for the
Laplacian and the minimal surface PDEs in the sense that if
\begin{equation}
\sup_{\partial B_{r}}K_{M}\leq-\frac{1+\varepsilon}{r^{2}\log r}\text{ (}\dim
M=2) \label{KE}
\end{equation}
for some $\varepsilon>0$ then it was proved by R. W. Neel in \cite{Ne} that
the asymptotic Dirichlet problem for the Laplacian PDE is solvable for any
continuous boundary data. In \cite{RT} it is proved that the same hypothesis
(\ref{KE}) on the curvature of a rotationally symmetric $M$ also implies the
solvability of the minimal surface equation in $M$ for any prescribed
continuous boundary data. We note that these remarks in the case of the
minimal PDE give a partial answer to a question formulated in \cite{GR}.
\end{remark}

\begin{theorem}
\label{main} Let $M$ be a Hadamard manifold with sectional curvature
satisfying $K_{M}\leq-k^{2}$ and assume that $M$ satisfies the SC condition.
Then the asymptotic Dirichlet problems for the minimal hypersurface
$\mathcal{M}=0$ PDE and the $p-$Laplacian $\Delta_{p}=0$ PDE, $p>1$, are
uniquely solvable for any given continuous boundary data at infinity. The
solution is in the classical sense in the minimal case and in the weak sense
in the $p-$Laplacian case. Precisely: Given $\varphi\in C^{0}\left(
\partial_{\infty}M\right)  $ there are solutions $u\in C^{\infty}\left(
M\right)  \cap C^{0}\left(  \overline{M}\right)  $ of $\mathcal{M}\left[
u\right]  =0$ and $v\in C^{1,\alpha}(M)\cap C^{0}\left(  \overline{M}\right)
$ of $\Delta_{p}\left[  v\right]  =0$ such that $u|_{\partial_{\infty}
M}=v|_{\partial_{\infty}M}=\varphi.$
\end{theorem}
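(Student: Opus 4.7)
The plan is to recognize that Theorem \ref{main} is simply the specialization of Theorem \ref{scab} to the two concrete operators: the minimal hypersurface operator $\mathcal{M}$, corresponding to $a(t)=t/\sqrt{1+t^{2}}$, and the $p$-Laplacian $\Delta_{p}$, corresponding to $a(t)=t^{p-1}$. Since the SC condition is in force by hypothesis, everything reduces to checking that both operators fit the structural framework \eqref{a1}--\eqref{a3} and satisfy hypotheses \textbf{(a)} and \textbf{(b)} of Theorem \ref{abcd}, and then to arguing uniqueness and interior regularity.

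The structural conditions are routine. For $a(t)=t^{p-1}$ with $p>1$, \eqref{a1} and \eqref{a2} are immediate, and \eqref{a3} holds with $q=p-1$ and any $\delta>0$. For $a(t)=t/\sqrt{1+t^{2}}$, one has $a(0)=0$ and $a'(t)=(1+t^{2})^{-3/2}>0$, giving \eqref{a1}; since $a\leq 1$, \eqref{a2} holds with $p=1$; and a Taylor expansion gives $a(t)\geq t^{2}$ for all sufficiently small $t$, so \eqref{a3} holds with $q=2$ and $\delta$ small.

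Next I would verify \textbf{(a)} and \textbf{(b)} by exhausting $M$ with geodesic balls $\Omega_{k}:=B_{k}(o)$ centered at a fixed point $o\in M$. Because $K_{M}\leq 0$, each $\partial\Omega_{k}$ is smooth and strictly convex, in particular strictly mean-convex. For the $p$-Laplacian, classical theory (Ladyzhenskaya--Ural\cprime tseva for existence and Lieberman/Tolksdorf for boundary regularity) produces a solution $u_{k}\in C^{1,\alpha}(\overline{\Omega_{k}})$ of the Dirichlet problem for smooth data, while the Di\-Benedetto--Tolksdorf interior $C^{1,\alpha}$-estimate yields compactness of $C^{0}$-bounded sequences. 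For $\mathcal{M}$, the strict mean-convexity of $\partial\Omega_{k}$ is precisely what is needed in the Serrin-type existence theorem for the minimal hypersurface equation on bounded domains of a Hadamard manifold, producing a smooth solution for each smooth boundary datum; interior gradient estimates of Korevaar/Spruck type then yield the required compactness from $C^{0}$-bounds together with Schauder theory.

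Having verified \textbf{(a)} and \textbf{(b)}, Theorem \ref{scab} immediately yields solutions in $C^{0}(\overline{M})$ realizing prescribed continuous asymptotic data. Uniqueness follows from the comparison principle for $\mathcal{Q}=0$ (a direct application of Lemma \ref{QvleQu} on each $\Omega_{k}$ combined with the observation that two solutions with the same asymptotic boundary values differ by at most $\sup_{\partial\Omega_{k}}|u-\tilde u|\to 0$ as $k\to\infty$). The stated interior regularity is then a consequence of standard quasilinear theory: once the minimal surface solution is known to be continuous, interior gradient bounds make the equation uniformly elliptic on compact sets, and Schauder bootstrap gives $u\in C^{\infty}(M)$; for the $p$-Laplacian, the Ural\cprime tseva--DiBenedetto--Tolksdorf regularity theorem gives $v\in C^{1,\alpha}_{\mathrm{loc}}(M)$. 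The only delicate point is invoking the existence theorem and the interior gradient estimate for $\mathcal{M}$ on mean-convex geodesic balls of a Hadamard manifold; the remaining ingredients are a straightforward assembly of results already established in the paper.
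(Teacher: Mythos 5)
Your proposal is correct and follows essentially the same route as the paper: reduce to Theorem \ref{scab} (i.e.\ Theorems \ref{barriers} and \ref{abcd}), check conditions \eqref{a1}--\eqref{a3}, and verify \textbf{(a)} and \textbf{(b)} on an exhaustion by geodesic balls using existence theory for smooth data (the paper cites Dajczer--Hinojosa--de Lira for $\mathcal{M}$ and classical $p$-Laplacian theory) together with interior gradient estimates (Spruck, Kotschwar--Ni) and standard elliptic regularity for compactness. Your explicit comparison argument for uniqueness is a welcome addition, since the paper asserts unique solvability but its proof only verifies \textbf{(a)} and \textbf{(b)}.
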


\begin{proof}
Due to Theorem \ref{barriers} above one only has to prove that conditions
\textbf{(a)} and \textbf{(b)} are satisfied for $\mathcal{M}$ and $\Delta_{p}
$. Fixing a point $o\in M$ we take $\Omega_{k}:=B_{k}$, the geodesic ball of
$M$ with radius $k$ centered at $o.$ Choose $k$ and let $\psi\in C^{\infty
}\left(  \partial B_{k}\right)  $ be given. In the $p-$Laplacian case, the
same arguments used for proving the existence of solutions of the
$p-$Laplacian PDE in $\mathbb{R}^{n}$ with prescribed $C^{\infty}$ boundary
data apply to prove the existence of a weak solution $u_{k}\in C^{1,\alpha
}\left(  \overline{B_{k}}\right)  $ of $\Delta_{p}=0$ in $B_{k}$ such that
$u_{k}|_{\partial B_{k}}=\psi$ for some $\alpha\in\left(  0,1\right)  $ which
depends only on $p.$ This proves that condition \textbf{(a)} is satisfied for
$\Delta_{p}$. The interior gradient estimates of \cite{K} and standard
elliptic PDE theory implies that uniformly $C^{0}$ bounded sequences of
solutions of $\Delta_{p}=0$ are compact on relatively compact subsets of $M$
proving that condition \textbf{(b)} is satisfied for $\Delta_{p}$. In the case
of the minimal PDE, condition \textbf{(a)} follows from Theorem 1 of
\cite{DHL} and elliptic regularity theory. Condition \textbf{(b)} follows from
Theorem 1.1 of \cite{S} and standard arguments from PDE elliptic theory. This
proves the theorem.
\end{proof}

\section{Applications}

\qquad In this section, we first show that rotationally symmetric manifolds
with sectional curvature $K_{M}\leq-k^{2}$ satisfy the strict convexity
condition. The proof of this fact is constructive and there is almost no
difference between the $3$-dimensional and the $n$-dimensional case, $n\geq3$,
as we shall see in the final of the constructions. The existence results
ensured by Theorem \ref{main} in rotationally manifolds, for the minimal and
$p-$Laplacian PDE's, $p>1,$ have already been obtained by different authors in
different papers, some of them with weaker conditions on the sectional
curvature (see \cite{GR}, \cite{EFR} and \cite{V2}).

Let $M$ be a $(n+1)$-dimensional rotationally symmetric Hadamard manifold with
center $o$. We may treat $M$ as the set $$\mathbb{R}^{n+1}=\{(x_{1}
,\dots,x_{n+1})|x_{i}\in\mathbb{R}\}$$ parametrized by $r$, the distance to
$(0,\dots,0)=o$, and $\theta_{1},\dots,\theta_{n}$, with
\begin{align}
\label{sphericalcoordinates} &  x_{1} = r\sin\theta_{1}\sin\theta_{2}\dots
\sin\theta_{n}\nonumber\\
&  x_{2} = r\cos\theta_{1}\sin\theta_{2}\dots\sin\theta_{n}\nonumber\\
&  x_{3} = r\cos\theta_{2}\sin\theta_{3}\dots\sin\theta_{n}\nonumber\\
&  \cdots\\
&  x_{n} = r\cos\theta_{n-1}\sin\theta_{n}\nonumber\\
&  x_{n+1} = r\cos\theta_{n},\nonumber
\end{align}
endowed with the metric
\begin{equation}
dr^{2} + f(r)^{2} d\omega^{2}, \label{rotsymmet}
\end{equation}
where $d\omega^{2}$ is the standard metric of $\mathbb{S}^{n}$. We use the
notation $M=(\mathbb{R}^{n+1},f)$ for $M$ if it is rotationally symmetric with
metric given by \eqref{rotsymmet}.

\begin{remark}
Notice that:

\begin{enumerate}
\item[(i)] The angles at the origin $o$ are the same whatever is the
function $f$ that gives the metric.

\item[(ii)] The parameter $\theta_{n}$ is the angle with the
direction $(0,\dots,0,1)$.
\end{enumerate}
\end{remark}

The vector fields defined by
\begin{equation}
U:=\frac{\partial}{\partial r}, \,\,V_{i}:=\frac{\partial}{\partial\theta_{i}}
\end{equation}
give a local frame for $M\setminus\{o\}$. If we use the notation
$g_{00}:=\langle U, U\rangle$ and $g_{ii}:= \langle V_{i},V_{i}\rangle$, it is
easy to see that
\begin{align}
\label{gijrotsym} &  g_{ii}=f(r)^{2}\sin^{2} \theta_{i+1}\sin^{2} \theta_{i+2}
\dots\sin^{2} \theta_{n}, 1\le i\le n-1,\nonumber\\
&  g_{00}=1, g_{nn}=f(r)^{2} \text{ and }g_{ij}=0\text{ if }i\neq j.
\end{align}

We now fix $\gamma:[0,+\infty)\to M$ a unit speed geodesic ray with
$\gamma(0)=o$. Since any geodesic ray with the same properties may be mapped
by an isometry on $\gamma$, we may assume without loss of generality that
$\gamma(s)=(0,\dots,0,s)$, $s\ge0$.

Let $r(t),\theta(t)$ be given by the solution of the system
\begin{equation}
\label{rtheta}\left\{
\begin{array}
[c]{lll}
r^{\prime}(t)=\cosh kR\sin\theta(t) &  & \\
\theta^{\prime}(t)=\displaystyle\frac{k\sinh kR}{\sinh^{2}(kr(t))} &  & \\
r(0)=R,\,\,\theta(0)=0, &  &
\end{array}
\right.
\end{equation}
and let $S_{R}\subset\mathbb{R}^{n+1}$ be the hypersurface parametrized by
\begin{align}
\label{SRparametrization}(t,\theta_{1},\dots,\theta_{n-1})\mapsto &  \left(
r(t)\sin\theta_{1}\sin\theta_{2}\dots\sin\theta_{n-1}\sin\theta(t),\right.
\nonumber\\
&  r(t)\cos\theta_{1}\sin\theta_{2}\dots\sin\theta_{n-1}\sin\theta
(t),\nonumber\\
&  r(t)\cos\theta_{2}\sin\theta_{3}\dots\sin\theta_{n-1}\sin\theta
(t),\nonumber\\
&  \left.  \dots,r(t)\cos\theta_{n-1}\sin\theta(t),r(t)\cos\theta(t)\right)  ,
\end{align}
where $r(t)$ and $\theta(t)$ are given by \eqref{rtheta}.

\begin{lemma}
\label{SRhyperbolic} If $M=\left(  \mathbb{R}^{n+1},\sinh(kr)/k\right)  $,
i.e., $M$ is the hyperbolic space with constant sectional curvature equal to
$-k^{2}$, then the hypersurface $S_{R}$ is the hyperbolic totally geodesic
hypersurface that intersects $\gamma$ ortogonally at $(0,\dots,0,R)$.
Moreover,
\begin{equation}
\label{abSR}\partial_{\infty}S_{R}=\left\{  \tilde{\gamma}(\infty
)|\tilde{\gamma}(0)=o,\sphericalangle_{o}(\tilde{\gamma}^{\prime}(0)
,\gamma^{\prime}(0))=\arccos\left(  \tanh kR\right)  \right\}  .
\end{equation}

\end{lemma}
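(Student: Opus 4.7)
The plan is to exploit rotational symmetry about $\gamma$ to reduce the claim to a $2$-dimensional check, and then verify that $(r(t),\theta(t))$ parametrizes the appropriate geodesic of $\mathbb{H}^2(-k^2)$. In $\mathbb{H}^{n+1}(-k^2)$ let $\Pi$ denote the unique totally geodesic hypersurface through $(0,\dots,0,R)$ orthogonal to $\gamma'(R)$. The isometries of $M$ fixing $\gamma$ pointwise act on \eqref{SRparametrization} by $O(n)$-rotations of $(\theta_1,\dots,\theta_{n-1})$ and preserve both $\Pi$ and $S_R$; each is therefore the orbit under this action of its intersection with the totally geodesic $2$-plane $P=\{x_2=\cdots=x_n=0\}$. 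Setting $\theta_1=\cdots=\theta_{n-1}=\pi/2$ in \eqref{SRparametrization} identifies $S_R\cap P$ with the polar curve $(r(t),\theta(t))$, while $\Pi\cap P$ is the unique geodesic of $P\cong\mathbb{H}^2(-k^2)$ through $(R,0)$ perpendicular to $\partial_r$, so it suffices to show these two curves coincide.

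A curve in the $2$-dimensional metric $dr^2+(\sinh(kr)/k)^2 d\theta^2$ is a unit-speed geodesic iff Clairaut's relation $(\sinh(kr)/k)^2\theta'\equiv\mathrm{const}$ and the energy identity $(r')^2+(\sinh(kr)/k)^2(\theta')^2=1$ both hold. Clairaut follows by direct substitution of \eqref{rtheta}, yielding the constant $\sinh(kR)/k$. The energy identity is equivalent to the vanishing of
\[
G(t):=\cosh(kR)\sinh(kr(t))\cos\theta(t)-\sinh(kR)\cosh(kr(t)),
\]
which satisfies $G(0)=0$. A short computation using \eqref{rtheta} together with $\cosh^2-\sinh^2=1$ yields
\[
G'(t)=\frac{k\cosh(kR)\sin\theta(t)\cosh(kr(t))}{\sinh(kr(t))}\,G(t),
\]
a linear homogeneous ODE with continuous coefficient, so $G\equiv 0$. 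The initial data $r(0)=R$, $\theta(0)=0$, $r'(0)=0$, $\theta'(0)=k/\sinh(kR)\neq 0$ place the curve at $(0,\dots,0,R)$ with unit tangent orthogonal to $\partial_r$, identifying it with $\Pi\cap P$ and hence $S_R=\Pi$.

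Finally, $G\equiv 0$ rearranges to $\cos\theta(t)=\tanh(kR)/\tanh(kr(t))$. Since $\theta$ is strictly increasing and bounded above by $\pi/2$, $\sin\theta$ is eventually bounded below by a positive constant, so $r'=\cosh(kR)\sin\theta$ forces $r(t)\to\infty$ and hence $\theta(t)\to\arccos(\tanh(kR))$. Rotating about $\gamma$ sweeps out every unit vector in $T_oM$ making angle $\arccos(\tanh(kR))$ with $\gamma'(0)$, which is \eqref{abSR}. The only genuinely nontrivial step is the derivation of unit speed, and the linear-ODE argument for $G$ handles it without any need to integrate \eqref{rtheta} in closed form.
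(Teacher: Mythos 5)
Your argument is correct in substance but takes a genuinely different route from the paper's. The paper proves the lemma by substituting $f(r)=\sinh(kr)/k$ into the second fundamental form computation of Lemma \ref{SRgeneral}: all the inequalities there become equalities, so $\langle\nabla_TT,N\rangle=\langle\nabla_{V_i}V_i,N\rangle=0$ and $S_R$ is totally geodesic, and the asymptotic boundary is read off from the first integral $\cos\theta(t)=\tanh(kR)\coth(kr(t))$ of \eqref{rtheta} -- which is exactly your identity $G\equiv0$ (in fact the derivative of $\cos\theta-\tanh(kR)\coth(kr)$ along \eqref{rtheta} is identically zero, so you do not even need the linear ODE for $G$). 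You instead reduce by the rotational symmetry about $\gamma$ to the totally geodesic $2$-plane $P$ and check that the profile curve $(r(t),\theta(t))$ is a complete unit-speed geodesic of $\mathbb{H}^2(-k^2)$ meeting the radial geodesic orthogonally at distance $R$; this buys you independence from the curvature computation of Lemma \ref{SRgeneral} and gives the orthogonal intersection at $\gamma(R)$ explicitly, at the cost of justifying the symmetry reduction (both $S_R$ and $\Pi$ are invariant under the rotations fixing $\gamma$ and are the orbits of their slices in $P$), which you do adequately.

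One step needs a patch: the criterion ``unit speed plus Clairaut if and only if geodesic'' is not true as stated, since a parallel $r\equiv r_0$ with $f'(r_0)\neq0$ satisfies Clairaut and can be given unit speed without being a geodesic. The standard sufficiency statement requires that no arc of the curve be a parallel; here that is immediate because $r'(t)=\cosh(kR)\sin\theta(t)$ vanishes only at $t=0$ (as $\theta$ is strictly increasing and $|\theta|<\pi/2$ by $G\equiv0$), and then differentiating the energy identity and using the differentiated Clairaut relation yields $r'\bigl(r''-ff_r(\theta')^2\bigr)=0$, so both geodesic equations hold for $t\neq0$ and, by continuity, everywhere. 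With that caveat inserted (and a one-line remark that the solution of \eqref{rtheta} is defined for all $t$, since $r\geq R$ and $r$ grows at most linearly, which you use tacitly when letting $t\to+\infty$), your proof is complete.
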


The preceding Lemma (that will be proved after the next proposition) shows
that $S_{R}$ exists and divides $\mathbb{R}^{n+1}$ in two connected
components. These ones are both convex if $\mathbb{R}^{n+1}$ is endowed with
the hyperbolic metric, since the second fundamental form of $S_{R}$ vanishes.
The next proposition shows that $S_{R}$ has the desired properties if we
replace the hyperbolic metric by a rotationally symmetric one, if $K_{M}
\leq-k^{2}$.

We assume that $R>0$, and we denote by $\Omega^{\prime}$ the connected
component of $\mathbb{R}^{n+1}\setminus S_{R}$ that contains the point
$(0,0,\dots,0)$.

\begin{lemma}
\label{SRgeneral} Let $S_{R}\subset\mathbb{R}^{n+1}$ be the hypersurface
defined as above, with $R>0$. If $M=\left(  \mathbb{R}^{n+1},f\right)  $ has
sectional curvature $K_{M}$ satisfying $K_{M} \le-k^{2}$, then $\Omega
^{\prime}$ is convex.
\end{lemma}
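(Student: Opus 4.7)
The approach is to exploit rotational symmetry to reduce convexity of $\Omega'$ to two 1-dimensional sign conditions, and then to treat each by Sturm comparison with the hyperbolic case given by Lemma \ref{SRhyperbolic}. First I would note that any $2$-plane $\Sigma_0$ containing the axis $\gamma$ is fixed pointwise by a reflection isometry of $M$ and is therefore totally geodesic, with intrinsic metric $dr^2 + f(r)^2 d\theta^2$. By the parametrization \eqref{SRparametrization}, $S_R$ is rotation-invariant about $\gamma$, so it is the hypersurface of revolution swept out by the planar curve $c(t)=(r(t),\theta(t))$ lying in $\Sigma_0$. Its shape operator is thus diagonalized by $c'(t)$ together with the $(n-1)$ rotational directions, producing a single meridian principal curvature $\kappa_m$ (the signed geodesic curvature of $c$ in $\Sigma_0$) and a repeated rotational principal curvature $\kappa_r$. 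Convexity of $\Omega'$ reduces to showing $\kappa_m \ge 0$ and $\kappa_r \ge 0$ with respect to the outward unit normal $\nu$ pointing out of $\Omega'$.

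The second step is to extract the analytic input from $K_M \le -k^2$. In warped product form the radial sectional curvature of $M$ is $-f''/f$, so $f'' \ge k^2 f$ with $f(0)=0$ and $f'(0)=1$. Sturm comparison with $f_0(r) := \sinh(kr)/k$, which satisfies $f_0'' = k^2 f_0$ and realizes the hyperbolic case, then yields $f \ge f_0$ and $f' \ge f_0' = \cosh(kr)$ on $[0,\infty)$. These two inequalities are the only consequence of the curvature bound that will enter the rest of the argument.

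For the meridian curvature, the key observation is that the ODE \eqref{rtheta} does not involve $f$; it was engineered so that when $f = f_0$ the curve $c$ is a geodesic of $\Sigma_0$, and hence $\kappa_m$ vanishes identically in the hyperbolic case (this is essentially Lemma \ref{SRhyperbolic}, whose verification is a direct Jacobi-equation computation). Substituting \eqref{rtheta} into the standard formula for the signed geodesic curvature on a surface with metric $dr^2+f(r)^2 d\theta^2$ produces an explicit expression for $\kappa_m$ that vanishes when $f = f_0$; the plan is to rewrite this expression as a manifestly nonnegative combination of the differences $f(r(t))-f_0(r(t))$ and $f'(r(t))-f_0'(r(t))$, with positive coefficients depending on $r(t)$ and $\theta(t)$, using along the way that $\sin\theta(t)\ge 0$ (which follows from $\theta(0)=0$ together with $\theta'>0$, and from the limit $\theta(\infty) = \arccos(\tanh kR) < \pi/2$ supplied by Lemma \ref{SRhyperbolic}). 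The Sturm inequalities of the previous paragraph then give $\kappa_m \ge 0$. The rotational curvature $\kappa_r$ is easier: for a hypersurface of revolution one has, up to sign, $\kappa_r = (f'(r)/f(r))\,\langle \nu,\partial_r\rangle$ at the point of $c$ of radial coordinate $r$, and since $\Omega'$ contains $o$ the outward normal satisfies $\langle \nu,\partial_r\rangle \ge 0$, while $f'/f > 0$ because $f$ is strictly increasing.

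The main obstacle will be the algebraic rearrangement in the third paragraph: one must verify explicitly that the expression for $\kappa_m$ produced by \eqref{rtheta} splits as a positive-coefficient linear combination of $f-f_0$ and $f'-f_0'$, with the coefficients remaining nonnegative all along $c$. Once this identity is in hand, positivity of the second fundamental form of $S_R$ with respect to $\nu$ combined with $K_M \le 0$ implies, by the standard characterization of geodesic convexity in Hadamard manifolds, that $\Omega'$ is convex.
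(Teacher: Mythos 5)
Your skeleton (reduce convexity of $\Omega'$ to the sign of the meridian and rotational principal curvatures of the revolution hypersurface $S_{R}$, and compare with the hyperbolic model $f_{0}(r)=\sinh(kr)/k$ via Sturm) is essentially the paper's strategy, but the step you dismiss as "easier" is wrong, and the step you call "the main obstacle" is left undone. For the rotational direction: the orbit of the rotation group through a point of $S_{R}$ with meridian coordinates $(r,\theta)$ is a sphere whose warping radius is $f(r)\sin\theta$, not $f(r)$, so the rotational second fundamental form is governed by the normal derivative of $f(r)\sin\theta$; in the frame used in the paper, $\langle\nabla_{V_{i}}V_{i},N\rangle$ is a positive multiple of $f'(r)\theta'-\tfrac{r'}{f}\cot\theta$, where $N$ points into $\Omega'$. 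It is not $(f'/f)\langle\nu,\partial_{r}\rangle$, and its nonnegativity is not a soft consequence of $f'>0$ and $o\in\Omega'$. A decisive check: in the hyperbolic case $f=f_{0}$ the hypersurface $S_{R}$ is totally geodesic (Lemma \ref{SRhyperbolic}), so this curvature vanishes identically, whereas your formula gives a strictly positive quantity (since $\theta'>0$ everywhere); hence the formula cannot be correct. In fact the rotational inequality needs the same comparison input as the meridian one: using $\cos\theta=\tanh kR\,\coth kr$, which follows from \eqref{rtheta}, it reduces to $f f'\ge \sinh(kr)\cosh(kr)/k$, i.e.\ precisely to $f\ge f_{0}$, $f'\ge f_{0}'$.

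For the meridian direction you only announce a plan (rewrite the geodesic curvature of the profile curve as a positive combination of $f-f_{0}$ and $f'-f_{0}'$) and acknowledge that verifying it is the main difficulty, so the heart of the proof is missing. Moreover your claim that only the two pointwise bounds $f\ge f_{0}$, $f'\ge f_{0}'$ will enter is not accurate: expanding $\langle\nabla_{T}T,N\rangle$ and using $\theta''=-2kr'\theta'\coth kr$, one of the two resulting terms has the sign of $f'/f-k\coth kr$, i.e.\ of the Wronskian $f'\sinh kr-kf\cosh kr$; this does follow from the same Sturm argument (it is the first inequality in Lemma \ref{comparisionrotsym}), but it is not a formal consequence of $f\ge f_{0}$ and $f'\ge f_{0}'$ alone, so it must be carried along explicitly. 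Finally, convexity of $\Omega'$ is the nonnegativity of the second fundamental form with respect to the normal pointing \emph{into} $\Omega'$ (this is how Theorem 4.1 of \cite{Choi} is invoked in the paper); stating the criterion with the outward normal, as you do, compounds the sign confusion above. With the normal fixed correctly, the rotational term computed from $f(r)\sin\theta$, and the meridian computation actually carried out, your outline collapses onto the paper's proof.
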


\begin{proof}
Since $S_{R}$ is parametrized by \eqref{SRparametrization}, the vector fields
defined by
\[
T:=\frac{\partial}{\partial t}, V_{1}, V_{2}, \dots, V_{n-1}
\]
give an orthogonal frame of $T_{p}S_{R}$ for any $p\in S_{R}$. Furthermore,
notice that $T=r^{\prime}(t)U+\theta^{\prime}(t)V_{n}$.

It is a straightforward calculation to verify that
\begin{equation}
N(t,\theta_{1},\dots,\theta_{n-1}):=-f(r(t))\theta^{\prime}(t)U+\frac
{r^{\prime}(t)}{f(r(t))}V_{n} \label{normalSR}
\end{equation}
is normal to $S_{R}$ pointing to $\Omega^{\prime}$. In order to show that
$\Omega^{\prime}$ is convex, we use Theorem 4.1 of \cite{Choi}: It suffices to
prove that the second fundamental form of $S_{R}$ with respect to $N$ is
nonnegative. It is then sufficient to show that
\[
\langle\nabla_{T}T,N\rangle,\,\langle\nabla_{V_{i}}V_{i},N\rangle
\geq0,\,\forall i\in\{1,\dots,n-1\},
\]
where $\nabla$ is the Levi-Civita connection of $\left(  \mathbb{R}
^{n+1},f\right)  $.

We start with $\langle\nabla_{T} T, N\rangle$:
\begin{align}
\label{nablaTTN1}\langle\nabla_{T} T, N\rangle &  = -f\theta^{\prime}
\langle\nabla_{T} T, U\rangle+ \frac{r^{\prime}}{f}\langle\nabla_{T} T,
V\rangle\nonumber\\
&  = -f\theta^{\prime}T\langle T, U\rangle+f\theta^{\prime}\langle T,
\nabla_{T} U\rangle+ \frac{r^{\prime}}{f} T\langle T, V\rangle- \frac
{r^{\prime}}{f}\langle T, \nabla_{T} V\rangle\nonumber\\
&  = -f\theta^{\prime}r^{\prime\prime}+f\theta^{\prime}\langle T, \nabla_{T}
U\rangle+ \frac{r^{\prime}}{f}\left(  f^{2}\theta^{\prime}\right)  ^{\prime}-
\frac{r^{\prime}}{f}\langle T, \nabla_{T} V\rangle.
\end{align}

Notice that
\begin{align}
\label{nablaTTN2}\langle T, \nabla_{T} U\rangle &  = r^{\prime2} \langle U,
\nabla_{U} U\rangle+ r^{\prime}\theta^{\prime}\left(  \langle U, \nabla_{V}
U\rangle+ \langle V, \nabla_{U} U\rangle\right)  + \theta^{\prime2} \langle V,
\nabla_{V} U\rangle\nonumber\\
&  = 0 + 0 + 0 + \theta^{\prime2} \frac{1}{2}U\left(  \Vert V\Vert^{2}\right)
= \theta^{\prime2}ff_{r}.
\end{align}
and
\begin{align}
\label{nablaTTN3}\langle T, \nabla_{T} V \rangle &  = r^{\prime2} \langle U,
\nabla_{U} V\rangle+ r^{\prime}\theta^{\prime}\left(  \langle U, \nabla_{V}
V\rangle+ \langle V, \nabla_{U} V\rangle\right)  + \theta^{\prime2} \langle V,
\nabla_{V} V\rangle\nonumber\\
&  = r^{\prime}\theta^{\prime}\left(  -\frac{1}{2}U\left(  \Vert
V\Vert^{2}\right)  +\frac{1}{2} U \left(  \Vert V\Vert^{2}\right)  \right)  +
\theta^{\prime2}\frac{1}{2} V\left(  \Vert V \Vert^{2}\right)  = 0.
\end{align}

Then, replacing \eqref{nablaTTN2} and \eqref{nablaTTN3} on \eqref{nablaTTN1},
it follows that
\begin{equation}
\label{nablaTTN}\langle\nabla_{T} T, N \rangle= f\theta^{\prime}\left(
ff_{r}\theta^{\prime2 }- r^{\prime\prime}\right)  + \frac{r^{\prime}}{f}
\left(  f^{2}\theta^{\prime}\right)  ^{\prime}.
\end{equation}

We now compute $\langle\nabla_{V_{i}}V_{i}, N\rangle$:
\begin{align}
\langle\nabla_{V_{i}}V_{i}, N\rangle &  = -f\theta^{\prime}\langle
\nabla_{V_{i}}V_{i}, U\rangle+ \frac{r^{\prime}}{f} \langle\nabla_{V_{i}}
V_{i}, V\rangle\nonumber\\
&  = f\theta^{\prime}\langle V_{i} , \nabla_{V_{i}} U \rangle- \frac
{r^{\prime}}{f} \langle V_{i} , \nabla_{V_{i}} V\rangle\nonumber\\
&  = f\theta^{\prime}\langle V_{i}, \nabla_{U} V_{i} \rangle- \frac{r^{\prime
}}{f} \langle V_{i}, \nabla_{V} V_{i} \rangle\nonumber\\
&  = f\theta^{\prime}\frac{1}{2} U\left(  \| V_{i}\|^{2}\right)  -
\frac{r^{\prime}}{f}\frac{1}{2} V\left(  \| V_{i}\|^{2}\right) \nonumber\\
&  = f^{2}f_{r} \theta^{\prime2}\theta_{i+1}\dots\sin^{2}\theta_{n} -
r^{\prime2}\theta_{i+1}\sin^{2}\theta_{n-1}\sin\theta_{n}\cos\theta
_{n}\nonumber\\
&  = f^{2} \sin^{2}\theta_{i+1}\dots\sin^{2}\theta_{n} \left(  f_{r}
\theta^{\prime}- \frac{r^{\prime}}{f}\cot\theta_{n}\right)  .
\end{align}

On the other hand, since $r$ and $\theta$ satisfy \eqref{rtheta}, we have
$\cos\theta=\tanh kR\coth kr$ and then
\begin{align*}
ff_{r}\theta^{\prime2}-r^{\prime\prime}  &  =ff_{r}\theta^{\prime2}
-\theta^{\prime}\cosh kR\cos\theta\\
&  =\theta^{\prime}\left(  ff_{r}\frac{k\sinh kR}{\sinh^{2}kr}-\cosh kR\coth
kr\tanh kR\right) \\
&  =\theta^{\prime}\left(  ff_{r}\frac{k\sinh kR}{\sinh^{2}kr}-\sinh
kR\frac{\cosh kr}{\sinh kr}\right) \\
&  =\frac{\theta^{\prime}}{\sinh^{2}kr}k\sinh kR\left(  ff_{r}-\cosh
kr\frac{\sinh kr}{k}\right)  \geq0
\end{align*}
where the last inequality is true by Lemma \ref{comparisionrotsym} (see
Section \ref{Appendices} below). Furthermore, $\theta^{\prime\prime
}=-2r^{\prime}\theta^{\prime}k\coth kr$ and then
\begin{align*}
r^{\prime}\left(  f^{2}\theta^{\prime}\right)  ^{\prime} =  &  r^{\prime
}\left(  2ff_{r}r^{\prime}\theta^{\prime} + f^{2}\theta^{\prime\prime}\right)
\\
=  &  2r^{\prime2}\theta^{\prime}\left(  ff_{r} - f^{2} k \coth kr\right)  =
2r^{\prime2}\theta^{\prime} f^{2} \left(  \frac{f_{r}}{f} - k \coth kr\right)
\ge0
\end{align*}
where the last inequality is again consequence of Lemma
\ref{comparisionrotsym}. Hence $\langle\nabla_{T}T,N\rangle\geq0$.

Moreover
\begin{align*}
f_{r}\theta^{\prime}-\frac{r^{\prime}}{f}\cot\theta &  =k\sinh kR\frac{f_{r}
}{\sinh^{2}kr}-\cosh kR\frac{\cos\theta}{f}\\
&  =\sinh kR\left(  k\frac{f_{r}}{\sinh^{2}r}-\frac{\coth kr}{f}\right) \\
&  =\frac{k\sinh kR}{f\sinh^{2}kr}\left(  ff_{r}-\cosh kr\frac{\sinh kr}
{k}\right)  \geq0,
\end{align*}
where the last inequality again comes from $f(r)\geq(\sinh kr)/k$ and
$f^{\prime}(r)\geq\cosh kr$. Therefore $\langle\nabla_{V_{i}}V_{i}
,N\rangle\geq0$ for all $i\in\{1,\dots,n-1\}$ and then $\Omega^{\prime}$ is convex.
\end{proof}

\begin{proof}
[Proof of Lemma \ref{SRhyperbolic}]In the hyperbolic space we have
$f(r)=(\sinh kr)/k$. If we replace $f(r)$ by $(\sinh kr)/k$ on the computation
done on the proof of the preceding proposition, it is easy to see that
$\left\langle \nabla_{T}T,N\right\rangle =0=\left\langle \nabla_{V_{i}}
V_{i},N\right\rangle $. To conclude, notice that $\cos\theta(t)=\tanh kR\coth
kr(t)$ implies that
\[
\lim_{t\rightarrow+\infty}\cos\theta(t)=\tanh kR,\lim_{t\rightarrow-\infty
}\cos\theta(t)=-\tanh kR,
\]
which implies that
\[
\partial_{\infty}S_{R}=\left\{  \tilde{\gamma}(\infty)\,|\,\tilde{\gamma
}(0)=o,\sphericalangle_{o}(\tilde{\gamma}^{\prime}(0) ,\gamma^{\prime
}(0))=\arccos\left(  \tanh kR\right)  \right\}  .
\]

\end{proof}

\begin{theorem}
\label{rotsim}If $M$ is a rotationally symmetric Hadamard manifold with
$K_{M}\leq-k^{2}$ then $M$ satisfies the SC condition.
\end{theorem}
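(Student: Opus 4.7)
The plan is to use the hypersurface $S_{R}$ constructed via the ODE system \eqref{rtheta} as the boundary of the domain $\Omega$ required by the SC condition. The key observation is that \eqref{rtheta} does not involve the warping function $f$, so the set $S_{R}\subset\mathbb{R}^{n+1}$ is the same subset regardless of which rotationally symmetric metric is put on $\mathbb{R}^{n+1}$; moreover, for any such metric the cone-topology identification $\partial_{\infty}M\cong\mathbb{S}^{n}$ is the one given by spherical angles at the origin $o$. Hence the limit computation at the end of Lemma \ref{SRhyperbolic} transplants verbatim to the general rotationally symmetric $M$, giving $\partial_{\infty}S_{R}=\{\theta_{n}=\arccos(\tanh kR)\}$.

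Given $x\in\partial_{\infty}M$ and a relatively open neighborhood $W$ of $x$, I would use rotational symmetry to reduce, without loss of generality, to the case where $x$ is the asymptotic endpoint of $\gamma$, i.e.\ the ``north pole'' $\theta_{n}=0$. Since $W$ is open in $\partial_{\infty}M$, it contains a polar cap $\{\theta_{n}<\varepsilon\}$ for some $\varepsilon>0$; I would then pick $R>0$ large enough that $\arccos(\tanh kR)<\varepsilon$, and take $\Omega$ to be the connected component of $\overline{M}\setminus\overline{S_{R}}$ that contains $x$ --- the polar-cap region. By the remark above, $x\in\operatorname{Int}(\partial_{\infty}\Omega)=\{\theta_{n}<\arccos(\tanh kR)\}\subset W$.

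The complement $M\setminus\Omega$ is the closure in $M$ of the component $\Omega^{\prime}$ of $\mathbb{R}^{n+1}\setminus S_{R}$ containing $o$, which is convex by Lemma \ref{SRgeneral}; since the closure of a geodesically convex set in a Hadamard manifold is geodesically convex, the remaining requirement of the SC condition is satisfied.

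The main technical point I expect to need care with is verifying that $\Omega$ is genuinely a $C^{2}$ open subset of $\overline{M}$, which reduces to showing that $S_{R}$ is a smooth embedded hypersurface even at its interior ``apex'' $\gamma(R)$, where the parametrization \eqref{SRparametrization} exhibits the usual spherical-coordinate degeneracy at $\theta(0)=0$. This should follow from the fact that $S_{R}$ meets $\gamma$ orthogonally at $\gamma(R)$ (guaranteed by the initial conditions of \eqref{rtheta}) together with the rotational symmetry of the construction around $\gamma$, but a brief reparametrization argument will be needed to make this rigorous.
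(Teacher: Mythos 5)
Your proposal is correct and follows essentially the same route as the paper: the same hypersurface $S_{R}$ from \eqref{rtheta}, convexity of $\overline{\Omega^{\prime}}$ via Lemma \ref{SRgeneral}, and the observation that angles at $o$ (hence $\partial_{\infty}S_{R}$, computed in Lemma \ref{SRhyperbolic}) are independent of the warping function $f$, with $R$ chosen so that the resulting polar cap sits inside $W$. The only addition is your explicit caution about smoothness of $S_{R}$ at the apex $\gamma(R)$, a minor point the paper leaves implicit; it is settled as you suggest, since $S_{R}$ is the same point set as the totally geodesic hypersurface of the hyperbolic model.
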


\begin{proof}
Let $x\in\partial_{\infty} M$ and $W\subset\partial_{\infty} M$ relatively
open subset such that $x\in\mathrm{Int}\, W$. Let $\gamma:[0,+\infty)\to M$
the geodesic unit speed ray such that $\gamma(0)=o$ and $\gamma(\infty)=x$ and
let $\alpha\in(0,\pi/2)$ be such that
\[
\{\widetilde{\gamma}(\infty)\,|\, \widetilde{\gamma}(0)=o, \sphericalangle
(\widetilde{\gamma}^{\prime}(0),\gamma^{\prime}(0)) \le\alpha\}\subseteq W.
\]
Chose $R>>0$ in such a way that $\tanh kR =\cos\alpha$ and construct $S_{R}$ e
$\Omega^{\prime}$ as above. Setting $\Omega:= M\setminus\overline
{\Omega^{\prime}}$, Proposition \ref{SRgeneral} give us that $M\setminus
\Omega$ is convex and furthermore
\[
\partial_{\infty}\Omega=\left\{ \widetilde{\gamma}(\infty)\,|\, \widetilde
{\gamma}(0)=o,\sphericalangle(\widetilde{\gamma}^{\prime}(0),\gamma^{\prime
}(0))\le\alpha\right\} ,
\]
since the angles at the origin $o$ do not depend on the function $f$ that
defines the metric, and hence $\partial_{\infty} S_{R}$ is given by
\eqref{abSR}. It follows that $\Omega$ satisfies all desired conditions, which
concludes the proof.
\end{proof}

We now consider the case that $M$ is a general Hadamard manifold with
$K_{M}\leq-k^{2},$ $k>0,$ with a controll on the decay of the sectional
curvature. We prove:

\begin{theorem}
\label{bor} Let $k>0$ be given. Suppose that $K_{M}\leq-k^{2}$ and that there
exists $o\in M$ such that
\begin{equation}
K_{R}:=\min\{K_{M}(\Pi);\text{ }\Pi\text{ is a }2-\text{plane in }
T_{p}M,\text{ }p\in B_{R+1}(o)\} \label{aR}
\end{equation}
satisfies
\begin{equation}
\label{aRcondition}K_{R}\geq-\frac{e^{2kR}}{R^{2+2\epsilon}},\,\,R\geq
R^{\ast},
\end{equation}
for some constants $\epsilon,$ $R^{\ast}>0$.\ Then, given $\gamma(\infty
)\in\partial_{\infty} M$ with $\gamma(0)=o$ and $0<\alpha<\pi/2$ there exists
a convex set $C\subset M$ such that

\begin{enumerate}
\item[(i)] $\partial_{\infty}C\supseteq\{\tilde{\gamma}
(\infty)|\sphericalangle_{o}(\tilde{\gamma}(\infty),\gamma(\infty))\geq
2\alpha\}$

\item[(ii)] $C\cap T(\gamma^{\prime}(0),\alpha,r_{0}+1)=\emptyset$
for some $r_{0}\geq R^{\ast}$ large enough.
\end{enumerate}

In particular, $M$ satisfies the\textrm{ SC} condition.
\end{theorem}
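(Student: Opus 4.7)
The plan is to adapt Borb\'ely's construction from \cite{B1}. The strategy is to take as a template the model convex set from the rotationally symmetric case (Lemma \ref{SRgeneral}), which in the constant curvature space $\mathbb{H}^{n+1}(-k^2)$ is a half-space bounded by a totally geodesic hypersurface, and to transplant it into the variable curvature manifold $M$ by assembling $C$ as a countable intersection of simpler convex pieces.

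Concretely, I would proceed as follows. Fix a sequence of heights $r_0 \le t_1 < t_2 < \cdots \to \infty$, with $r_0 \ge R^{\ast}$ to be determined. For each $i$, place a convex geodesic ball $B_i \subset M$ whose center and radius are prescribed by the hyperbolic model: in $\mathbb{H}^{n+1}(-k^2)$, the chosen sequence $\{B_i\}$ would have intersection equal to the half-space arising from Lemma \ref{SRgeneral} (with $R$ chosen so that $\tanh(kR) = \cos(2\alpha)$). Define $C := \bigcap_i B_i$. This is convex in $M$ as an intersection of convex sets (since geodesic balls in a Hadamard manifold are convex). The rotationally symmetric case of Lemma \ref{SRgeneral} serves both as the design template and as a quantitative benchmark against which the variable curvature construction is compared.

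The heart of the proof is to verify (i) and (ii) for this $C$ in the variable curvature setting. The argument requires controlling how much the geometry of $M$ can deviate from the hyperbolic model on the ball $B_{R+1}(o)$, which is done by a Rauch/Jacobi-field comparison. Each ball $B_i$ in $M$ differs from its counterpart in $\mathbb{H}^{n+1}(-k^2)$ by an amount that can be estimated by integrating $|K_M - (-k^2)|$ against the growth rate of Jacobi fields. The specific rate $e^{2kR}/R^{2+2\epsilon}$ in \eqref{aRcondition} is calibrated for exactly this comparison: the factor $e^{2kR}$ matches the exponential growth of Jacobi fields in curvature $-k^2$, while the polynomial factor $R^{-(2+2\epsilon)}$ makes the resulting series convergent (the exponent $2+2\epsilon$ provides the necessary slack). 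This allows one to show that, for $r_0$ large enough, $\partial_\infty C$ contains the closed hemisphere $\{\xi : \sphericalangle_o(\xi,\gamma(\infty)) \ge 2\alpha\}$ (yielding (i)) and that $C$ remains disjoint from the open $\alpha$-cone $T(\gamma'(0),\alpha,r_0+1)$ (yielding (ii)).

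The main obstacle is the bookkeeping in the iterated comparison: errors compound across consecutive $B_i$, and one must use \eqref{aRcondition} to show that the compounded discrepancy between $C$ and the model half-space remains summable. This is the technically delicate part of Borb\'ely's original construction, and is the reason one needs a quantitative lower bound on $K_M$ rather than merely the upper bound $K_M \le -k^2$. Once $C$ is constructed with (i) and (ii), the SC condition follows readily: given $y \in \partial_\infty M$ and an open $W \ni y$, take $\gamma$ to be the geodesic from $o$ to $y$ and choose $\alpha$ small enough that $\{\xi : \sphericalangle_o(\xi,y) < 2\alpha\} \subset W$. Setting $\Omega := M \setminus C$, property (i) gives $\partial_\infty \Omega \subset \{\sphericalangle_o < 2\alpha\} \subset W$, property (ii) gives that $\Omega$ contains the open $\alpha$-cone at $y$ (an open neighborhood of $y$ in $\partial_\infty M$) so that $y \in \mathrm{Int}(\partial_\infty \Omega)$, and $M \setminus \Omega = C$ is convex by construction.
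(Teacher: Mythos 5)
Your construction has a structural flaw that kills it at the outset: you define $C:=\bigcap_i B_i$ as an intersection of geodesic balls, but every geodesic ball is bounded, so $C\subset B_1$ is bounded and $\partial_\infty C=\emptyset$; conclusion (i), which demands that $\partial_\infty C$ contain a whole ``hemisphere'' of $\partial_\infty M$, therefore fails before any comparison argument begins. Nor can the hyperbolic template be reached in a limit: in curvature $-k^2$, spheres through a fixed point whose centers recede to infinity converge to horospheres (whose principal curvatures equal $k>0$), not to the totally geodesic hypersurface $S_R$, so the half-space of Lemma \ref{SRgeneral} is neither an intersection nor a monotone limit of geodesic balls. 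This obstruction is precisely why a Borb\'ely-type construction is needed rather than a transplantation of the model convex set.

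The actual proof is quite different, and the role of \eqref{aRcondition} is not a Rauch comparison between $M$ and $\mathbb{H}^{n+1}(-k^2)$. One builds an increasing sequence of convex sets $C_0=B_{r_0}\subset C_1\subset\cdots$, where at each stage a thin shell is added to the previous set after excising a small cap around the direction $\gamma$: the building blocks are the sublevel sets $\{\rho_o-\varepsilon_R\,\phi(\rho_p)\le R\}$ of perturbed distance functions, for $p$ in the excised part of the sphere $S_R(o)$. Lemma \ref{betaL} shows these are convex provided $\varepsilon_R=\beta R^{1+\epsilon}e^{-kR}$, and this is exactly where the lower curvature bound enters: one needs the upper Hessian bound $D^2\rho_p\le a_R\coth(a_R\rho_p)\,\|\cdot\|^2$ with $a_R=\sqrt{-K_R}\le e^{kR}R^{-(1+\epsilon)}$ (note that Jacobi fields grow like $e^{kR}$, not $e^{2kR}$; the $2$ in the exponent of \eqref{aRcondition} only reflects that the curvature is the square of that rate). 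The set $C=\bigcup_n C_n$ is convex as an increasing union of convex sets, and (i)--(ii) follow from angular bookkeeping: each excised cap lies in a unit ball at distance $r_n$ from $o$, which subtends an angle at most $Ce^{-kr_n}$ at $o$ (Lemma \ref{halfviewingangle}), while the number of steps with $r_n$ in a unit interval $[r_0+n,r_0+n+1]$ is at most $\varepsilon_{j_n}^{-1}\le\beta^{-1}e^{k(r_0+n+1)}(r_0+n)^{-(1+\epsilon)}$; hence the total excised angle is bounded by a constant times $\sum_n(r_0+n)^{-(1+\epsilon)}$, which is made smaller than $\alpha$ by choosing $r_0$ large. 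Your proposal leaves exactly this quantitative mechanism --- the admissible perturbation size and the summability of the removed angles --- as an unproved black box, and replaces it with a comparison scheme that cannot produce the required unbounded convex set; only your final paragraph (deducing the SC condition once a $C$ with (i)--(ii) exists) is essentially sound.
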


We observe that it is easy to check that if \eqref{aRcondition} holds for some
point $o$ then it holds for all points, changing the constant $R^{\ast}$ if necessary.

For the proof of Theorem \ref{bor} we follow the construction of Anderson
\cite{A} and Borb\'ely \cite{B1}. Both results are concerned about the
existence of manifolds that satisfy the convex conic neighborhood condition of
Choi. Since we are interested in a similar but distinct result than Anderson
and Borb\'ely, we need to rewrite the proof with some adaptations. The
principal idea is that since the sectional curvature is bounded from above by
$-k^{2}$, $k>0$, all geodesic spheres have principal curvatures greater then
$k$ (if we orient them inwards). It allows us to take out a small piece of the
geodesic spheres and the remaining set is still convex. We observe that
Borbely requires a somewhat stronger condition, namely: $\inf_{B_{R}}K_{M}
\geq-e^{\lambda R},$ $R\geq R^{\ast},$ for some $R^{\ast}$ and $0<\lambda
<1/3.$

We observe that I. Holopainen and A. V\"{a}h\"{a}kangas in \cite{HV} prove
that the $p-$Laplacian is regular at infinity under the same hypothesis on the
curvature of $M$, namely, $K_{M}\leq-k^{2}$ and condition (\ref{aRcondition})
(see Theorem 3.21 and Corollary 3.23 of \cite{HV}). It is interesting to note
that the authors in \cite{HV}, using a quite different technique, arrived at
Corollary 3.23 requiring, as we did, the same decay condition for the
sectional curvature of $M.$

\bigskip
We start with an arbitrary choice of a smooth function
$\phi:[0,+\infty)\rightarrow\mathbb{R}$ such that $0\leq\phi\leq1$,
$\phi([0,1/2])=0$, $\phi\equiv1$ on $[1,+\infty)$ and $\phi^{\prime}\ge0$. Let
$L>0$ be such that $\phi^{\prime},\phi^{\prime\prime}\leq L$. For $p\in
S_{R}(o)$, let $f_{p}:M\rightarrow\mathbb{R}$ be given by
\[
f_{p}(x)=\phi(\rho_{p}(x)),
\]
where $\rho_{q}$ denotes the distance function to $q\in M$.

From now on, we use the notation $a_{R}:=\sqrt{-K_{R}}$.

\begin{lemma}
\label{betaL} There exists $\beta>0$ which depends only on $L$ and $k$ such
that if
\begin{equation}
\varepsilon_{R}:=\beta R^{1+\epsilon}e^{-kR},\label{epsilonR}
\end{equation}
then the sublevel set
\[
\{x\in M;\text{ }(\rho_{o}-\varepsilon_{R}f_{p})(x)\leq R\}
\]
is a strictly convex set for all $R\geq\max\{1,R^{\ast}\}$ and $p\in S_{R}(o)$.
\end{lemma}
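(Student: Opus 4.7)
The plan is to reinterpret the statement as strict convexity of the $C^{2}$ hypersurface $\{F=R\}$, where $F:=\rho_{o}-\varepsilon_{R}f_{p}$, and to prove it by showing that the Hessian of $F$, restricted to directions tangent to the level set, is positive definite. Since $M$ is Hadamard, this boundary convexity forces the sublevel set itself to be (geodesically) convex. Decompose
\[
\mathrm{Hess}\, F \;=\; \mathrm{Hess}\,\rho_{o} \;-\; \varepsilon_{R}\,\phi'(\rho_{p})\,\mathrm{Hess}\,\rho_{p} \;-\; \varepsilon_{R}\,\phi''(\rho_{p})\, d\rho_{p}\otimes d\rho_{p}.
\]
On $\{F=R\}$, $\rho_{o}=R+\varepsilon_{R}f_{p}\in[R,R+1]$, so the boundary lies in $B_{R+1}(o)$; the perturbation term is supported in $\{\rho_{p}\in[1/2,1]\}\subset B_{1}(p)\subset B_{R+1}(o)$, where the lower curvature bound $K_{M}\ge K_{R}\ge -a_{R}^{2}$ applies.

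The central analytic step is to apply Hessian comparison in both directions. From $K_{M}\leq-k^{2}$ and $\rho_{o}\geq R\geq 1$, one gets $\mathrm{Hess}\,\rho_{o}(v,v)\geq k\coth(k\rho_{o})|v^{\perp}|^{2}\geq k|v^{\perp}|^{2}$, where $v^{\perp}$ is the component of $v$ orthogonal to $\nabla\rho_{o}$. From $K_{M}\geq -a_{R}^{2}$ in $B_{R+1}(o)$, one gets $\mathrm{Hess}\,\rho_{p}(v,v)\leq a_{R}\coth(a_{R}\rho_{p})|v|^{2}\leq C_{0}(a_{R}+1)|v|^{2}$ on the annulus $\rho_{p}\in[1/2,1]$, for an absolute constant $C_{0}$. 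Combined with $|\phi'|,|\phi''|\leq L$ this gives $\varepsilon_{R}|\mathrm{Hess}\, f_{p}(v,v)|\leq \varepsilon_{R} L\, C_{1}(a_{R}+1)|v|^{2}$. Since $|\nabla F|\geq 1-\varepsilon_{R}L>0$ for $\varepsilon_{R}$ small, $\{F=R\}$ is smooth; and if $v\perp\nabla F$, the orthogonality constraint $\langle v,\nabla\rho_{o}\rangle=\varepsilon_{R}\phi'(\rho_{p})\langle v,\nabla\rho_{p}\rangle$ forces the $\nabla\rho_{o}$-component of $v$ to satisfy $|v^{\parallel}|\leq\varepsilon_{R}L|v|$, so that $|v^{\perp}|^{2}\geq|v|^{2}/2$ once $\varepsilon_{R}L\leq 1/\sqrt{2}$. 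Assembling the estimates,
\[
\mathrm{Hess}\, F(v,v) \;\geq\; \frac{k}{2}|v|^{2} \;-\; \varepsilon_{R}\,L\,C_{2}(a_{R}+1)|v|^{2}.
\]

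Substituting $a_{R}\leq e^{kR}/R^{1+\epsilon}$ and $\varepsilon_{R}=\beta R^{1+\epsilon}e^{-kR}$, the negative term is bounded by $\beta L C_{3}|v|^{2}$ uniformly for $R\geq\max\{1,R^{\ast}\}$ and $p\in S_{R}(o)$, where $C_{3}$ depends only on $k$ and $\epsilon$. Choosing $\beta<k/(4LC_{3})$, which depends only on $k$ and $L$, yields $\mathrm{Hess}\, F(v,v)\geq (k/4)|v|^{2}>0$, the desired strict convexity. The main obstacle is the precise calibration: the polynomial factor $R^{1+\epsilon}$ in $\varepsilon_{R}$ is tuned exactly so that $\varepsilon_{R}\cdot a_{R}$ stays uniformly bounded as $R\to\infty$; without this slack the perturbation would overwhelm the convexity of $\rho_{o}$. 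A subsidiary point to handle is that outside the transition annulus $\{\rho_{p}\in[1/2,1]\}$ the function $f_{p}$ is locally constant and $F$ differs from $\rho_{o}$ only by a constant, so strict convexity there is immediate from Hessian comparison applied to $\rho_{o}$ alone.
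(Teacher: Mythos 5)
Your proposal is correct and takes essentially the same route as the paper's proof: the same decomposition of $D^{2}(\rho_{o}-\varepsilon_{R}f_{p})$, two-sided Hessian comparison (using $K_{M}\leq-k^{2}$ for $\rho_{o}$ and $K_{M}\geq-a_{R}^{2}$ on $B_{R+1}(o)$ for $\rho_{p}$ on the annulus where $\phi'\neq0$), the same use of the orthogonality constraint $\langle v,\nabla\rho_{o}\rangle=\varepsilon_{R}\phi'\langle v,\nabla\rho_{p}\rangle$ to control the radial component, and the same calibration making $\varepsilon_{R}a_{R}$ uniformly bounded before choosing $\beta$. The only differences are bookkeeping (the paper absorbs the radial correction via Cauchy--Schwarz into a factor $k(1-\varepsilon_{R}L)$ instead of your $|v^{\perp}|^{2}\geq|v|^{2}/2$), and, just as in the paper's own choice of $\beta$ through $m=\sup_{R\geq1}R^{1+\epsilon}e^{-kR}$, your constant in fact also depends on $\epsilon$, which is harmless since $\epsilon$ is fixed.
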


\begin{proof}
It suffices to prove that the Hessian of $\rho_{o}-\varepsilon_{R}f_{p}$ is
positive-definite for all $X\perp\nabla(\rho_{o}-\varepsilon_{R}f_{p})$. Let
$X\perp\nabla(\rho_{o}-\varepsilon_{R}f_{p})$. Then
\begin{align*}
D^{2}(\rho_{o}-\varepsilon_{R}f_{p})(X,X)    =\langle\nabla_{X}\left(
\nabla\rho_{o}-\varepsilon_{R}\phi^{\prime}\nabla\rho_{p}\right)  ,X\rangle\\
  =D^{2}\rho_{o}(X,X)-\varepsilon_{R}\phi^{\prime}D^{2}\rho_{p}
(X,X)-\varepsilon_{R}\phi^{\prime\prime}\langle\nabla\rho_{p},X\rangle^{2}.
\end{align*}
Since $\rho_{o}$ is a distance function, we may have 
$$D^{2}\rho_{o}(X,X)=D^{2}\rho_{o}(X^{\perp},X^{\perp}),$$ where $X^{\perp}=X-\langle
X,\nabla\rho_{o}\rangle\nabla\rho_{o}$; furthermore, $\langle X,\nabla\rho
_{o}\rangle=\varepsilon_{R}\phi^{\prime}\langle X,\nabla\rho_{p}\rangle$.
Finally, since $K_{M}\leq-k^{2}$, the Hessian Comparison Theorem implies the
estimative $D^{2}\rho_{o}(X^{\perp},X^{\perp})\geq k \Vert X^{\perp}\Vert^{2}
$, and therefore
\begin{align}
D^{2}\rho_{o}(X,X) & \geq k\left( \Vert X\Vert^{2}- \langle X, \nabla\rho
_{o}\rangle^{2}\right) \nonumber\\
& = k\left( \Vert X\Vert^{2} - \varepsilon_{R} \phi^{\prime}\langle X,
\nabla\rho_{p} \rangle\langle X, \nabla\rho_{o} \rangle\right)  \ge
k(1-\varepsilon_{R} L) \Vert X\Vert^{2}.\label{D2rhoo}
\end{align}
On the other hand, since $\phi^{\prime}\equiv0$ on $[1,+\infty)$ and
$K_{M}\geq-a_{R}^{2}$ in $B_{R+1}(o)$, it is possible to apply the Hessian's
Comparison Theorem in the other direction to obtain
\begin{align*}
\phi^{\prime}D^{2}\rho_{p}(X,X)  &  \leq\phi^{\prime}a_{R}\coth(a_{R}\rho
_{p})\Vert X-\langle X,\nabla\rho_{p}\rangle\nabla\rho_{p}\Vert^{2}\\
&  =\phi^{\prime}a_{R}\coth(a_{R}\rho_{p})\left(  \Vert X\Vert^{2}-\langle
X,\nabla_{p}\rangle^{2}\right) \\
&  \leq\phi^{\prime}a_{R}\coth(a_{R}\rho_{p})\Vert X\Vert^{2}.
\end{align*}
Since we have $\phi^{\prime}\equiv0$ on $[0,1/2]$, with $a_{R}\geq1$, we also
obtain
\begin{equation}
\phi^{\prime}D^{2}\rho_{p}(X,X)\leq\phi^{\prime}a_{R}\coth(a_{R}/2)\Vert
X\Vert^{2}\leq L a_{R}\coth(k/2)\Vert X\Vert^{2}. \label{D2rhop}
\end{equation}
Therefore, if $\Vert X\Vert=1$, using that $a_{R} \le e^{kR}R^{-(1+\epsilon)}
$, it holds that
\begin{align}
D^{2}(\rho_{o}-\varepsilon_{R}f_{p})(X,X)    \geq k(1 -\varepsilon
_{R}L)-\varepsilon_{R}La_{R}\coth(k/2)-\varepsilon_{R}L\nonumber\\
  = k(1-\beta e^{-kR}R^{1+\epsilon} L) - \beta e^{-kR}R^{1+\epsilon} L (a_{R}
\coth(k/2) - 1)\nonumber\\
  \geq k-\beta\left( m(k+1) + \coth(k/2)\right) L,\nonumber
\end{align}
where $m:=\max\{e^{-kR}R^{1+\epsilon}\,|\, R\ge1\}$, and then, if we choose
\begin{equation}
\beta:=\frac{k}{2L(m(k+1)+\coth(k/2))},\label{beta}
\end{equation}
we obtain $D^{2}(\rho_{o}-\varepsilon_{R}f_{p})(X,X)\geq k/2$, which concludes
the proof.
\end{proof}

\bigskip
\begin{proof}
[Proof of Theorem \ref{bor}]We start with $r_{0}\geq R^{\ast}$ to be latter
determined, and let $v:=\gamma^{\prime}(0)\in T_{o}M$. Define the following
sets:
\[
C_{0}:=B_{r_{0}},\,\,\,\,\,\,T_{0}:=\{p\in S_{r_{0}};\sphericalangle
(\gamma_{op},v))<\alpha\},\,\,\,\,\,\,D_{0}=\emptyset
\]
Due to Lemma \ref{betaL}, there exists an uniform $\varepsilon_{0}
:=\varepsilon_{r_{0}}$ such that for each $p\in S_{r_{0}}(o)$, the sublevel
set
\[
C_{1,p}:=\{x\in M;\text{ }(\rho_{o}-\varepsilon_{0}f_{p})(x)\leq r_{0}\}
\]
is a convex set. We therefore define the second collection of sets as
follows:
\begin{align*}
&  \widetilde{C_{1}}:=\bigcap_{p\in T_{0}}C_{1,p},\,\,\,\,\,\,C_{1}
:=\widetilde{C_{1}}\setminus D_{0},\,\,\,\,\,\,r_{1}:=r_{0}+\varepsilon_{0}\\
&  D_{1}:=B_{r_{1}}(0)\setminus C_{1},\,\,\,\,\,\,T_{1}:=\overline{S_{r_{1}
}(o)\setminus\partial C_{1}}.
\end{align*}
Notice that $C_{1}$ is convex, since it is the intersection of convex sets.

We also define
\[
\theta_{0}:=\sup\{\sphericalangle(\gamma_{op}^{\prime}(0),\gamma_{oq}^{\prime
}(0));\text{ }p\in T_{0},\text{ }q\in S_{1}(p)\}.
\]
Therefore, $\sphericalangle(v,\gamma_{ox}^{\prime}(0))\leq\alpha+\theta_{0}$,
for all $x\in D_{1}$. In fact, if $x\in D_{1}$, there exists $p\in T_{0}$ such
that $x\in B(r_{1})\setminus C_{1,p}$, which implies that $r_{0}
+\varepsilon\phi(\rho_{p}(x))<\rho(x)<r_{1}$, and then $x\in B_{1}(p)$,
otherwise we would conclude that $r_{1}<r_{1}$. Hence
\[
\sphericalangle(v,\gamma_{ox}^{\prime}(0))\leq\sphericalangle(v,\gamma
_{op}^{\prime}(0))+\sphericalangle(\gamma_{op}^{\prime}(0),\gamma_{ox}
^{\prime}(0))\leq\alpha+\theta_{0}.
\]

We now proceed inductively on the following way: Assume that $C_{k}
,r_{k},D_{k},T_{k}$ are defined for all $k\leq n-1$, $n\geq1$, and that all
$C_{k}$'s are convex sets. By Lemma \ref{betaL}, there exists uniform
$\varepsilon_{n}$ such that if $p\in S_{r_{n-1}}(o)$, the set $C_{n,p}
:=\{\rho_{o}-\varepsilon_{n-1}f_{p}\leq r_{n-1}\}$ is convex. Define
\begin{align*}
&  \widetilde{C_{n}}:=\bigcap_{p\in T_{n-1}}C_{n,p},\,\,\,\,\,\,C_{n}
:=\widetilde{C_{n}}\setminus D_{n-1},\,\,\,\,\,\,r_{n}:=r_{n-1}+\varepsilon
_{n-1}\\
&  D_{n}:=B_{r_{n}}(0)\setminus C_{n},\,\,\,\,\,\,T_{n}:=\overline{S_{r_{n}
}(o)\setminus\partial C_{n}},\\
&  \theta_{n}:=\sup\{\sphericalangle(\gamma_{op}^{\prime}(0),\gamma
_{oq}^{\prime}(0));p\in T_{n},q\in S_{1}(p)\}.
\end{align*}

\textbf{Claim 1.} $C_{n}$ is a convex set.

For, let $x,y\in C_{n}$ and assume, by contradiction, that $\gamma
_{xy}\nsubseteq C_{n}$. Since $C_{n}\subset\widetilde{C_{n}}$, which is
convex, it must occur that $\gamma_{xy}\cap D_{n-1}\neq\emptyset$. In
particular, $\gamma_{xy}$ must intersect $\partial D_{n-1}$ at least twice. On
the other hand, since $C_{n-1}$ is a convex set, $\gamma_{xy}$ cannot cross
$\partial C_{n-1}$ twice, otherwise it would be contained in $C_{n-1}\subset
C_{n}$. Hence $\gamma_{xy}$ must intersect $\partial D_{n-1}\setminus\partial
C_{n-1}=\mathrm{{int}\,}T_{n-1}$. But in that case, $\gamma_{xy}$ would
contain points of $B_{r_{n}}(o)\setminus\widetilde{C_{n}}$, which contradicts
the convexity of $\widetilde{C_{n}}$.

\textbf{{Claim 2.}} If $x\in D_{n}$, then $\sphericalangle(v, \gamma
_{ox}^{\prime}(0)) \le\alpha+ \sum_{i=0}^{n-1} \theta_{i}$.

In fact, the definition of $D_{n}$ implies that $D_{n}=D_{n-1} \cup\left(
B_{r_{n}}(o) \setminus\widetilde{C_{n}}\right)  $. If $x\in D_{n-1}$, it is
finished, since by induction it is possible to conclude that $\sphericalangle
(v, \gamma_{oy}^{\prime}(0)) \le\alpha+ \sum_{i=0}^{n-2} \theta_{i}$ for all
$y\in D_{n-1}$. Otherwise, there exists $p\in T_{n-1}$ such that $x\in
B_{r_{n}}(o) \setminus C_{n,p}$, which implies that $x\in B_{r_{n}}(o) \cap
B_{1}(p)$ for some $p\in T_{n-1}$. Then it holds that
\begin{align*}
\sphericalangle(v, \gamma_{ox}^{\prime}(0))  &  \le\sphericalangle(v,
\gamma_{op}^{\prime}(0)) + \sphericalangle(\gamma_{op}^{\prime}(0),\gamma
_{ox}^{\prime}(0))\\
&  \le\sphericalangle(v, \gamma_{op}^{\prime}(0)) + \theta_{n-1}.
\end{align*}
On the other hand, it is easy to see that $T_{n-1} \subset\overline{D_{n-1}}$,
hence $$\sphericalangle(v, \gamma_{op}^{\prime}(0)) \le\alpha+ \sum_{i=0}^{n-2}
\theta_{i},$$ which concludes the proof of Claim 2.

\textbf{Claim 3.} Let
\[
C:=\bigcup_{n\ge0} C_{n},\,\,\,\,\,\, D:=\bigcup_{n\ge0} D_{n}.
\]
Then $C$ is a convex set and $M=C\cup D$.

Since by construction we have $C_{0}\subseteq C_{1}\subseteq C_{2}
\subseteq\cdots$ and all $C_{n}$s are convex, it is follows immediately that
$C$ is convex. In order to prove that $M=C\cup D$, it is sufficient to show
that $r_{n}\rightarrow+\infty$ as $n\rightarrow+\infty$ because $B_{r_{n}
}(o)=C_{n}\cup D_{n}$. If we assume, by contradiction, that there exists $A>0$
such $r_{n}\leq A$ for all $n\geq0$, we conclude that
\[
\varepsilon_{n}=\beta r_{n}^{1+\epsilon}e^{-kr_{n}}\geq\beta r_{0}
^{1+\epsilon} e^{-kA},\,\,\forall n\geq0,
\]
and using that
\[
r_{n+1}=r_{n}+\varepsilon_{n}=r_{0}+\sum_{i=0}^{n}\varepsilon_{n},
\]
we conclude that $r_{n}\rightarrow+\infty$, which leads to an absurd.

Because of Claim 3, to obtain \textrm{{(i)}} it suffices to prove that 
$$\partial_{\infty}D\subset\{\tilde{\gamma}(\infty)|\sphericalangle_{o}(v,\tilde{\gamma}(\infty))\leq2\alpha\}.$$ 
 For, notice that, at least
formally, if $x\in D$,
\[
\sphericalangle_{o}(v,\gamma_{ox}^{\prime}(0))\leq\alpha+\sum_{n=0}^{+\infty
}\theta_{n}.
\]
The only thing that remains to be proved is that the series converges to a
constant $\leq\alpha$ if we choose $r_{0}$ correctly. The main idea is to
estimate how many $r_{i}$'s are on each interval in order to control, in some
sense, the number of terms of the sum. For, let $t_{n}$ be the number of
$r_{i}$'s on the interval $I_{n}:=[r_{0}+n,r_{0}+n+1]$, $n\geq0$. Let $j_{n}$
be the index of the greatest $r_{i}$ on $I_{n}$. Since $\varepsilon_{i}$ is
decreasing on $i$, we have:
\begin{align*}
t_{n}\varepsilon_{j_{n}}  &  \leq\varepsilon_{j_{n}-1}+\varepsilon_{j_{n}
-2}+\cdots+\varepsilon_{j_{n}-t_{n}}\\
&  =(r_{j_{n}}-r_{j_{n}-1})+(r_{j_{n}-1}-r_{j_{n}-2})+\cdots+(r_{j_{n}
+1-t_{n}}-r_{j_{n}-t_{n}})\\
&  =r_{j_{n}}-r_{j_{n}-t_{n}}\leq(r_{0}+n+1)-(r_{0}+n)=1.
\end{align*}
It implies that
\[
t_{n}\leq\varepsilon_{j_{n}}^{-1}\leq\frac{e^{k(r_{0}+n+1)}}{\beta
(r_{0}+n)^{1+\epsilon}},
\]
where the last inequality is true since $r_{0}+n\leq r_{j_{n}}\leq r_{0}+n+1$.

Combining the above estimative with Lemma \ref{halfviewingangle} (see
Section\ref{Appendices}), we obtain that
\begin{align*}
\sum_{i=0}^{\infty}\theta_{i}  &  =\sum_{n=0}^{\infty}\sum_{r_{i}\in I_{n}
}\theta_{i} \leq\sum_{n=0}^{\infty}t_{n}Ce^{-k(r_{0}+n)}\leq\frac{Ce}{\beta}
\sum_{n=0}^{\infty}\frac{1}{(r_{0}+n)^{1+\epsilon}}\\
\end{align*}
It is then clear that with a correct choice of $r_{0}$ it is possible to
obtain $\theta<2\alpha$.

To prove \textrm{{(ii)}}, let $\tilde{\gamma}$ be a geodesic ray from $o$
satisfying $\sphericalangle_{o}(\tilde{\gamma}^{\prime}(0),v)<\alpha$. Then
$\tilde{\gamma}$ must intersect $T_{0}$ at distance $r_{0}$, going out of $C$.
Since by construction it holds that $\partial C\supseteq T_{0}$ and $C$ is
convex, $\tilde{\gamma}$ can not intersect $C$ after a distance $r_{0}$.
\end{proof}

\subsection{\label{Appendices}Appendix}

\begin{lemma}
\label{comparisionrotsym} Let $M$ be a rotationally symmetric manfold, with
metric given by
\[
dr^{2}+f(r)^{2}(d\theta^{2}+\sin^{2}\theta d\varphi^{2}),
\]
where $f:[0,+\infty)\to\mathbb{R}$ is a smooth function that satisfies
$f(0)=0$, $f^{\prime}(0)=1$ and $r$ is the distance function to the center $o$
of $M$. Suppose that $K_{M} \le-k^{2}<0$. Then for all $r\ge0$,
\begin{equation}
\frac{f^{\prime}(r)}{f(r)} \ge k\coth(kr),\,\,\,\,\, f(r)\ge\frac{\sinh
(kr)}{k}\text{ and } f^{\prime}(r) \ge\cosh(kr).
\end{equation}

\end{lemma}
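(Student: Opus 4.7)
The plan is to convert the curvature hypothesis into a differential inequality for $f$, then use a standard Sturm comparison against the model function $g(r):=\sinh(kr)/k$. Recall that for a rotationally symmetric metric $dr^{2}+f(r)^{2}d\omega^{2}$, the radial sectional curvatures equal $-f''(r)/f(r)$. Hence $K_{M}\le -k^{2}$ translates into the ODE inequality
\[
f''(r)\ge k^{2}f(r),\qquad f(0)=0,\ f'(0)=1,
\]
while the model $g$ satisfies $g''=k^{2}g$, $g(0)=0$, $g'(0)=1$. The initial conditions $f(0)=0$ and $f'(0)=1$ are needed to ensure smoothness of the metric at the origin, and they play the role of matching data for the comparison.

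Next I would look at the Wronskian $h(r):=f'(r)g(r)-f(r)g'(r)$. A direct differentiation gives
\[
h'(r)=f''(r)g(r)-f(r)g''(r)\ge k^{2}f(r)g(r)-k^{2}f(r)g(r)=0,
\]
and from the initial data $h(0)=0$, so $h(r)\ge 0$ for all $r\ge 0$. Since $f,g>0$ on $(0,\infty)$, dividing by $fg$ yields $f'/f\ge g'/g=k\coth(kr)$, which is the first claimed inequality.

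To obtain the second inequality, I would integrate $\bigl(\log f\bigr)'\ge\bigl(\log\sinh(kr)\bigr)'$ from a small $\varepsilon>0$ to $r$, then let $\varepsilon\downarrow 0$; since $f(\varepsilon)/\varepsilon\to 1$ and $\sinh(k\varepsilon)/(k\varepsilon)\to 1$, the ratio $f(\varepsilon)/(\sinh(k\varepsilon)/k)\to 1$, giving $f(r)\ge\sinh(kr)/k$. The third inequality then follows immediately by multiplying the first two:
\[
f'(r)=f(r)\cdot\frac{f'(r)}{f(r)}\ge\frac{\sinh(kr)}{k}\cdot k\coth(kr)=\cosh(kr).
\]

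There is no serious obstacle here; the only mildly delicate point is handling the singularity of $\coth(kr)$ and the zero of $f$ at $r=0$ when integrating or dividing, which is dispatched by taking limits from $\varepsilon>0$ and using the initial conditions $f(0)=0$, $f'(0)=1$.
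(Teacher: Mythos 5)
Your proof is correct and follows essentially the same route as the paper: both convert $K_{M}\le -k^{2}$ into $f''g - fg''\ge 0$ for the model $g(r)=\sinh(kr)/k$, deduce the Wronskian inequality $f'g-fg'\ge 0$ (the paper phrases your logarithmic-derivative integration as monotonicity of $f/g$ together with $\lim_{r\to 0^{+}}f/g=1$), and obtain $f'\ge\cosh(kr)$ by combining the first two inequalities. No gaps; the limit argument at $r=0$ handles the only delicate point exactly as in the paper.
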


\begin{proof}
Let $p\in M$ such that $r(p)=r$. Then it is well-known that the sectional
curvature of a $2$-plane generated by $\gamma_{op}^{\prime}(r)$ and any other
vector is given by $-f^{\prime\prime}(r)/f(r)$. Let $g(r):=(1/k)\sinh(kr)$.
Since $K_{M} \le-k^{2}$, it occurs that
\begin{align*}
\frac{f^{\prime\prime}(r)}{f(r)} \ge k^{2} = \frac{g^{\prime\prime}(r)}{g(r)}
\Rightarrow f^{\prime\prime}(r)g(r) - g^{\prime\prime}(r)f(r) \ge0\\
\Rightarrow\left(  f^{\prime}(r)g(r) - g^{\prime}(r)f(r)\right)  ^{\prime}
\ge0 \Rightarrow f^{\prime}(r)g(r) - g^{\prime}(r)f(r) \ge0,
\end{align*}
therefore
\begin{equation}
\label{comp1}\frac{f^{\prime}(r)}{f(r)}\ge\frac{g^{\prime}(r)}{g(r)} = k \coth
kr
\end{equation}
and also
\[
\left(  \frac{f(r)}{g(r)}\right)  ^{\prime} \ge0.
\]
The last inequality implies that
\[
\frac{f(r)}{g(r)} \ge\lim_{r\rightarrow0^{+}} \frac{f(r)}{g(r)} = 1
\Rightarrow f(r) \ge g(r) = \frac{\sinh kr}{k}.
\]
Using this fact on \eqref{comp1}, it is possible to conclude that $f^{\prime
}(r) \ge g^{\prime}(r)=\cosh kr$, which concludes the proof.
\end{proof}

\begin{lemma}
\label{halfviewingangle} Let $M$ be a Hadamard manifold with $K_{M}\leq-k^{2}
$, $o\in M$ and $p\in M$ such that $\rho_{o}(p)=R$, with $R\geq\tilde{r}$,
where $\tilde{r}$ is an universal constant. Then
\[
\theta_{R}:=\max\{\sphericalangle(\gamma_{op}^{\prime}\left(  0\right)
,\gamma_{oq}^{\prime}\left(  0\right)  );\text{ }q\in S_{1}(p)\}\leq
2\,\frac{\sinh k}{\sinh kR}.
\]
In particular, there exists an universal constant $C>0$ such that $\theta
_{R}\leq Ce^{-kR}$.
\end{lemma}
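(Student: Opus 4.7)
The plan is to reduce the estimate to a computation in the model hyperbolic plane $\mathbb{H}^2_{-k^2}$ of constant curvature $-k^2$ via Toponogov comparison, and then to close with elementary hyperbolic trigonometry.

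First, I would fix $q\in S_1(p)$ and consider the geodesic triangle $opq$ in $M$ with side lengths $|op|=R$, $|pq|=1$, $|oq|=\ell\in[R-1,R+1]$. Since $K_M\leq -k^2$ and $M$ is Hadamard, the $\mathrm{CAT}(-k^2)$ property (i.e., Toponogov's triangle comparison for upper curvature bounds) supplies a comparison triangle $\bar o\bar p\bar q$ in $\mathbb{H}^2_{-k^2}$ with the same three side lengths, and with angle $\bar\alpha$ at $\bar o$ satisfying $\sphericalangle(\gamma_{op}'(0),\gamma_{oq}'(0))\leq\bar\alpha$.

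Next, I would observe that as $\bar q$ ranges over the closed hyperbolic ball $\bar B(\bar p,1)\subset\mathbb{H}^2_{-k^2}$, the angle $\bar\alpha$ at the fixed vertex $\bar o$ is maximized when the geodesic from $\bar o$ to $\bar q$ is tangent to $\partial\bar B(\bar p,1)$, producing a right triangle with right angle at $\bar q$, hypotenuse $|\bar o\bar p|=R$, and leg $|\bar p\bar q|=1$ opposite $\bar\alpha$. The right-triangle hyperbolic law of sines then gives
$$\sin\bar\alpha_{\max}=\frac{\sinh k}{\sinh kR},$$
so $\theta_R\leq \arcsin(\sinh k/\sinh kR)$.

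To conclude, I would invoke the elementary inequality $\arcsin x\leq 2x$ on $[0,1]$ (which follows from the concavity of $2x-\arcsin x$ together with $2-\pi/2>0$); this applies once $R$ is large enough that $\sinh k/\sinh kR\leq 1$, and this is the role of the universal threshold $\tilde r$. The resulting bound is $\theta_R\leq 2\sinh k/\sinh kR$, and the exponential estimate $\theta_R\leq Ce^{-kR}$ with $C:=8\sinh k$ follows from $\sinh kR\geq e^{kR}/4$ valid for large $R$. I do not foresee a serious obstacle here: Toponogov and the right-triangle hyperbolic law of sines are classical, and the only point requiring mild care is checking that the tangent configuration really maximizes the angle, which follows by monotonicity of the visual angle as $\bar q$ sweeps along $\partial\bar B(\bar p,1)$.
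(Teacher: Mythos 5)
Your proposal is correct and follows essentially the same route as the paper: reduce to the model plane $\mathbb{H}^{2}(-k^{2})$ by curvature comparison, identify the extremal position of $q$ with the tangent configuration to get $\sin\theta_{R}=\sinh k/\sinh kR$, and finish with $\arcsin x\le 2x$ and $\sinh kR\ge e^{kR}/4$. The only difference is cosmetic: you evaluate the extremal angle intrinsically via the hyperbolic right-triangle law of sines, whereas the paper computes the same quantity in the Poincar\'e disk model by locating the Euclidean center and radius of the image circle under a M\"obius map.
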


\begin{proof}
By Toponogov's Theorem, it suffices to prove that the estimative for
$\theta_{R}$ holds on the particular case that $M=\mathbb{H}^{2}(-k^{2})$, the
hyperbolic plane of sectional curvature $-k^{2}$. Let $\mathbb{D}^{2}$ be the
unitary disc $\{(x,y)\in\mathbb{R}^{2};$ $x^{2}+y^{2}<1\}$ endowed with the
conformal metric
\[
\langle u,v\rangle_{(x,y)}=\frac{4\langle u,v\rangle_{e}}{k^{2}(1-x^{2}
-y^{2})^{2}},
\]
where $\langle\cdot,\cdot\rangle_{e}$ denotes the Euclidean metric.

Since the hyperbolic space is homogeneous, we may assume without loss of
generality that $p$ has Euclidean coordinates $(a,0)$, $a>0$. An immediate
consequence of the expression of the metric is that $a=\tanh(kR/2)$. We
observe that the hyperbolic and Euclidean angles at $o=(0,0)$ coincide,
because the hyperbolic metric is conformal to the Euclidean one. Therefore, to
compute $\theta_{R}$ it suffices to compute the Euclidean maximal angle of the
circle $\mathbb{B}$ of hyperbolic center $(a,0)$ and hyperbolic radius $1$. It
is easy to see that $\theta_{R}$ is then given by
\begin{equation}
\sin\theta_{R}=\frac{\text{Euclidean radius of }\mathbb{B}}{\text{Euclidean
norm of the Euclidean center of }\mathbb{B}}. \label{sinthetaR}
\end{equation}
It then remains to compute the Euclidean radius and center of $\mathbb{B}$.
For, notice that if we write $z=x+iy$, the M\"{o}ebius transformation
\[
g(z):=\frac{z+a}{az+1}
\]
is an isometry of $\mathbb{D}^{2}$ that maps the horizontal axis $\{y=0\}$ on
itself and maps the hyperbolic circle $\widetilde{\mathbb{B}}$ centered at $o$
of radius $1$ (which is the Euclidean circle centered at $(0,0)$ of radius
$b:=\tanh(k/2)$ on the hyperbolic circle centered at $(a,0)$ with hyperbolic
radius $1$). Observe that the Euclidean and hyperbolic radius and center do
not coincide, however by the expression of $g$ is easy to see that $g(-b)$ and
$g(b)$ form a diameter of $\mathbb{B}$, from what we obtain after
straightforward computations that
\begin{align*}
\sin\theta_{R}  &  =\frac{g(b)-g(-b)}{g(b)+g(b)}=\frac{b(1-a^{2})}{a(1-b^{2}
)}\\
&  =\frac{\tanh(k/2)(1-\tanh^{2}(kR/2))}{\tanh(kR/2)(1-\tanh^{2}(k/2))}
=\frac{\tanh(k/2)\cosh^{2}(k/2)}{\tanh(kR/2)\cosh^{2}(kR/2)}\\
&  =\frac{\sinh(k/2)\cosh(k/2)}{\sinh(kR/2)\cosh(kR/2)}=\frac{\sinh k}{\sinh
kR}
\end{align*}
and then
\[
\theta_{R}=\arcsin\left(  \frac{\sinh k}{\sinh kR}\right)  ,
\]
and if $\sinh^{2}k/\sinh^{2}kR\leq3/4$, we have $\sqrt{1-\sinh^{2}k/\sinh
^{2}kR}\geq1/2$, which implies that
\[
\arcsin\frac{\sinh k}{\sinh kR}=\int_{0}^{\frac{\sinh k}{\sinh kR}}\frac
{dt}{\sqrt{1-t^{2}}}\leq\int_{0}^{\frac{\sinh k}{\sinh kR}}2dt,
\]
which gives the result.

Furthermore, if $R\geq(\ln{2})/2k$, we obtain that $\sinh kR\geq e^{kR}/4$,
and hence if we write $C:=8\sinh(k)$, we obtain the desired estimative, with
$\tilde{r}$ satisfying both conditions $\sinh^{2}k\tilde{r}\geq4\sinh
^{2}(k)/3$ and $\tilde{r}\geq(\ln{2})/2k$.
\end{proof}

\centerline{
\begin{tabular}{ccc}
{\small Jaime B. Ripoll} & $\hspace{1.5cm}$& {\small Miriam Telichevesky} \\
{\small UFRGS}  &  & {\small UFRGS}\\
{\small Instituto de Matem\'atica} &
& {\small Instituto de Matem\'atica}\\
{\small Av. Bento Gon\c calves 9500} & & {\small Av. Bento Gon\c calves 9500}\\
{\small 91540-000 Porto Alegre-RS } & &{\small 91540-000 Porto Alegre-RS }\\
{\small  BRASIL} &  & {\small BRASIL} \\
{\small jaime.ripoll@ufrgs.br}& &{\small miriam.telichevesky@ufrgs.br} \\
\end{tabular}}

\end{document}